\begin{document}

\theoremstyle{plain}
\newtheorem{thm}{Theorem}[section]
\newtheorem{cor}[thm]{Corollary}
\newtheorem{con}[thm]{Conjecture}
\newtheorem{cla}[thm]{Claim}
\newtheorem{lm}[thm]{Lemma}
\newtheorem{prop}[thm]{Proposition}
\newtheorem{example}[thm]{Example}

\theoremstyle{definition}
\newtheorem{dfn}[thm]{Definition}
\newtheorem{alg}[thm]{Algorithm}
\newtheorem{prob}[thm]{Problem}
\newtheorem{rem}[thm]{Remark}

\newcommand{\de}{\textnormal{det}}
\newcommand{\fall}{\downarrow}
\newcommand{\dimension}{\textnormal{dim~}}
\newcommand{\sign}{\textnormal{sign}}
\renewcommand{\baselinestretch}{1.1}
\newcommand{\w}{\widehat}

\title{\bf Solving the Ku-Wales conjecture on the eigenvalues of the derangement graph}
\author{
Cheng Yeaw Ku
\thanks{ Department of Mathematics, National University of
Singapore, Singapore 117543. E-mail: matkcy@nus.edu.sg} \and Kok
Bin Wong \thanks{
Institute of Mathematical Sciences, University of Malaya, 50603
Kuala Lumpur, Malaysia. E-mail:
kbwong@um.edu.my.} } \maketitle

\begin{abstract}\noindent
We give a new recurrence formula for the eigenvalues of the derangement graph. Consequently, we provide a simpler proof of the Alternating Sign Property of the derangement graph. Moreover, we prove that the absolute value of the eigenvalue decreases whenever the corresponding partition decreases in the dominance order. In particular, this settles affirmatively a conjecture of Ku and Wales (J. of Combin. Theory, Series A 117 (2010) 289--312) regarding the lower and upper bound for the absolute values of these eigenvalues.
\end{abstract}

\bigskip\noindent
{\sc keywords:} Cayley graphs, symmetric group, shifted schur functions, derangement graph\newline
\noindent
{\sc \small  2010 MSC: 05C35, 05C45}

\section{Introduction}

Let $G$ be a finite group and $S$ be a subset of $G$. The Cayley graph $\Gamma(G,S)$ is the graph which has the elements of $G$ as its vertices and two vertices $u, v \in G$ are joined by an edge if and only if $uv^{-1} \in S$. We require that $S$ is a nonempty subset of $G$ satisfying the condition that $s \in S \Longrightarrow s^{-1} \in S$ and $1 \not \in S$.

The {\em derangement graph} $\Gamma_{n}$ is the Cayley graph $\Gamma(\mathcal{S}_{n}, \mathcal{D}_{n})$ where $\mathcal{S}_{n}$ is the symmetric group on $[n]=\{1, \ldots, n\}$, and $\mathcal{D}_{n}$ is the set of derangements in $\mathcal{S}_{n}$. That is, two vertices $g$, $h$ of $\Gamma_{n}$ are joined if and only if $g(i) \not = h(i)$ for all $i \in [n]$, or equivalently $gh^{-1}$ fixes no point.

Clearly, $\Gamma_{n}$ is vertex-transitive, so it is $D_{n}$-regular where $D_{n} = |\mathcal{D}_{n}|$. It is well known that the largest eigenvalue of a regular graph is its degree. However, it is generally difficult to determine the smallest eigenvalue of a regular graph. Recently, after having derived a recurrence formula (see Theorem \ref{Ren} below) for the eigenvalues of $\Gamma_{n}$, Renteln \cite{Renteln} showed that the smallest eigenvalue $\mu$ of $\Gamma_{n}$ is $-\frac{D_{n}}{n-1}$. The value of $\mu$ was also determined independently by Ellis et al. \cite{EFP} in their seminal work on intersecting families of permutations. The recurrence obtained by Renteln was later used by Ku and Wales \cite{Ku-Wales} to prove the Alternating Sign Property (ASP) of the derangement graph (Theorem \ref{alternating-sign}). The purpose of this paper is to give a new recurrence formula for these eigenvalues. This new recurrence, which follows from the property of shifted schur functions, provides a simpler proof of the ASP and settles affirmatively a conjecture of Ku and Wales regarding the lower bound and upper bounds for the absolute values of these eigenvalues.

Recall that a Cayley graph $\Gamma(G,S)$ is {\em normal} if $S$ is closed under conjugation. It is well known that the eigenvalues of a normal Cayley graph $\Gamma(G,S)$ can be expressed in terms of the irreducible characters of $G$.

\begin{thm}[\cite{Babai, DS, Lub, Ram}]\label{cayley}
The eigenvalues of a normal Cayley graph $\Gamma(G,S)$ are integers given by
\begin{eqnarray}
\eta_{\chi} & = & \frac{1}{\chi(1)} \sum_{s \in S} \chi(s),
\end{eqnarray}
where $\chi$ ranges over all the irreducible characters of $G$. Moreover, the multiplicity of $\eta_{\chi}$ is $\chi(1)^{2}$.
\end{thm}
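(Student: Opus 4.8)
The plan is to recast the adjacency matrix of $\Gamma(G,S)$ as the action of a central element of the group algebra on the regular representation, and then read off its eigenvalues block by block from the Wedderburn decomposition. Identify $\mathbb{C}^{|G|}$ with the group algebra $\mathbb{C}[G]$ by letting the standard basis correspond to the elements of $G$, and set $\sigma_S=\sum_{s\in S}s\in\mathbb{C}[G]$. A direct computation with the adjacency rule $uv^{-1}\in S$ shows that the adjacency matrix $A$ of $\Gamma(G,S)$ is precisely the matrix of left multiplication by $\sigma_S$ on $\mathbb{C}[G]$; since $S$ is a union of conjugacy classes, $\sigma_S$ lies in the center $Z(\mathbb{C}[G])$, so this operator also coincides with right multiplication by $\sigma_S$.

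Next I would invoke $\mathbb{C}[G]\cong\bigoplus_{\chi}\mathrm{End}(V_\chi)$, where $\chi$ runs over the irreducible characters and $V_\chi$ is the corresponding simple module, $\dimension V_\chi=\chi(1)$. Because $\sigma_S$ is central it acts on each simple module $V_\chi$ as a scalar $\eta_\chi$; to identify $\eta_\chi$, compute the trace of $\sigma_S$ on $V_\chi$ in two ways. On the one hand it equals $\eta_\chi\dimension V_\chi=\eta_\chi\,\chi(1)$; on the other hand it equals $\sum_{s\in S}\chi(s)$. Hence $\eta_\chi=\frac{1}{\chi(1)}\sum_{s\in S}\chi(s)$, which is the claimed formula. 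For the multiplicities, recall that the regular representation decomposes as $\bigoplus_\chi V_\chi^{\oplus\chi(1)}$; since $\sigma_S$ acts as $\eta_\chi$ on every copy of $V_\chi$, the number $\eta_\chi$ is an eigenvalue of $A$ with multiplicity $\chi(1)\cdot\chi(1)=\chi(1)^2$, and $\sum_\chi\chi(1)^2=|G|$ confirms that these multiplicities are exact and that the listed $\eta_\chi$ exhaust the spectrum of $\Gamma(G,S)$.

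Finally, for integrality, write $S$ as a disjoint union of conjugacy classes $C$ with representatives $g_C$, so that $\eta_\chi=\sum_{C\subseteq S}\frac{|C|\,\chi(g_C)}{\chi(1)}$. Each summand is the scalar by which the class sum of $C$ acts on $V_\chi$, hence is a root of a monic polynomial with integer coefficients (the class sums form a $\mathbb{Z}$-basis of $Z(\mathbb{Z}[G])$ and multiply with nonnegative integer structure constants); therefore $\eta_\chi$ is an algebraic integer, and it is real since $S=S^{-1}$ makes $A$ a symmetric matrix. The step I expect to require the most care is upgrading ``real algebraic integer'' to ``integer'': this is the rationality of $\eta_\chi$, which follows from a Galois argument when $S$ is stable under the relevant power maps and is in any case automatic in the setting of this paper, where $G=\mathcal{S}_n$ has an integral character table; a rational algebraic integer is a rational integer, which finishes the proof. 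The other steps are standard representation theory, the only bookkeeping point being the left/right convention in identifying $A$ with $\sigma_S$.
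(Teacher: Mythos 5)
The paper does not prove this statement at all: it is quoted as a known result with citations to Babai, Diaconis--Shahshahani, Lubotzky, and Ram Murty, so there is no in-paper argument to compare against. Your proposal is the standard proof found in those references and it is correct: the identification of the adjacency matrix with left multiplication by $\sigma_S$ on $\mathbb{C}[G]$, the use of centrality and Schur's lemma to get scalar action on each $V_\chi$, the trace computation yielding $\eta_\chi=\frac{1}{\chi(1)}\sum_{s\in S}\chi(s)$, and the multiplicity count $\chi(1)^2$ from the isotypic decomposition of the regular representation (with $\sum_\chi\chi(1)^2=|G|$ confirming exhaustiveness) are all sound. You were also right to single out integrality as the delicate point: the blanket claim that the $\eta_\chi$ are integers is in fact false for arbitrary normal Cayley graphs (e.g.\ the $5$-cycle as a Cayley graph on $\mathbb{Z}/5\mathbb{Z}$ with $S=\{\pm1\}$ has irrational eigenvalues), and it genuinely requires either closure of $S$ under the appropriate power maps or an integral character table. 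For $G=\mathcal{S}_n$, which is the only case the paper uses, the character table is integral, so $\eta_\chi$ is a rational algebraic integer and hence an integer, exactly as you argue. No gaps.
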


Recall that a partition $\lambda$ of $n$, denoted by $\lambda \vdash n$, is a weakly decreasing sequence $\lambda_{1} \ge \ldots \ge \lambda_{r}$ with $\lambda_{r} \ge 1$ such that $\lambda_{1} + \cdots + \lambda_{r} = n$. We write $\lambda = (\lambda_{1}, \ldots, \lambda_{r})$. The {\em size} of $\lambda$, denoted by $|\lambda|$, is $n$ and each $\lambda_{i}$ is called the {\em $i$-th part} of the partition. We also use the notation $(\mu_{1}^{a_{1}}, \ldots, \mu_{s}^{a_{s}}) \vdash n$ to denote the partition where $\mu_{i}$ are the distinct nonzero parts that occur with multiplicity $a_{i}$. For example,
\[ (5,5,4,4,2,2,2,1) \longleftrightarrow (5^{2}, 4^{2}, 2^{3}, 1). \]

Clearly, the derangement graph $\Gamma_{n}$ is normal since the set $\mathcal{D}_{n}$ is closed under conjugation. On the other hand, it is well known that both the conjugacy classes of $\mathcal{S}_{n}$ and the irreducible characters of $\mathcal{S}_{n}$ are indexed by partitions $\lambda$ of $[n]$. Therefore, the eigenvalue $\eta_{\chi_{\lambda}}$ of the derangement graph can be denoted by $\eta_{\lambda}$. Throughout, we shall use this notation.

To describe the recurrence formula of Renteln, we require some terminology. To the Young diagram of a partition $\lambda$, we assign
$xy$-coordinates to each of its boxes by defining the
upper-left-most box to be $(1,1)$, with the $x$ axis increasing to
the right and the $y$ axis increasing downwards. Then the {\em hook}
of $\lambda$ is the union of the boxes $(x',1)$ and $(1, y')$ of the
Ferrers diagram of $\lambda$, where $x' \ge 1$, $y' \ge 1$. Let
$\w{h}_{\lambda}$ denote the hook of $\lambda$ and let $h_{\lambda}$
denote the size of $\w{h}_{\lambda}$. Similarly, let
$\w{c}_{\lambda}$ and $c_{\lambda}$ denote the first column of
$\lambda$ and the size of $\w{c}_{\lambda}$ respectively. Note that
$c_{\lambda}$ is equal to the number of rows of $\lambda$. When
$\lambda$ is clear from the context, we replace $\w{h}_{\lambda}$,
$h_{\lambda}$, $\w{c}_{\lambda}$ and $c_{\lambda}$ by $\w{h}$, $h$,
$\w{c}$ and $c$ respectively. Let $\lambda-\w{h} \vdash n-h$ denote
the partition obtained from $\lambda$ by removing its hook. Also,
let $\lambda-\w{c}$ denote the partition obtained from $\lambda$ by
removing the first column of its Ferrers diagram, i.e.
$(\lambda_{1}, \ldots, \lambda_{r})-\w{c} = (\lambda_{1}-1, \ldots,
\lambda_{r}-1) \vdash n-r$.

\begin{thm}[\cite{Renteln} Renteln's Formula]\label{Ren}
For any partition $\lambda$, the eigenvalues of the derangement graph $\Gamma_{n}$ satisfy the following recurrence:
\begin{eqnarray}
\eta_{\lambda} & = & (-1)^{h}\eta_{\lambda-\hat{h}} + (-1)^{h+\lambda_{1}}h\eta_{\lambda-\w{c}} \label{1-recurrence}
\end{eqnarray}
with initial condition $\eta_{\emptyset} = 1$.
\end{thm}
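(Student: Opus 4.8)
\emph{Proof idea.} The plan is to pass to symmetric functions, extract a closed formula for $\eta_{\lambda}$, and then obtain the recurrence by peeling the first row and the first column off the Young diagram of $\lambda$. By Theorem~\ref{cayley}, $\chi_{\lambda}(1)\eta_{\lambda}=\sum_{\sigma\in\mathcal D_{n}}\chi_{\lambda}(\sigma)$. Sorting derangements by cycle type and applying the Frobenius formula $\chi_{\lambda}(\mu)=\langle s_{\lambda},p_{\mu}\rangle$, where $s$ and $p$ are the Schur and power-sum symmetric functions and $\langle\,,\,\rangle$ is the Hall inner product, this becomes
\begin{align*}
\chi_{\lambda}(1)\,\eta_{\lambda}
&=\sum_{\substack{\mu\vdash n\\ \mu\ \text{has no part}\ 1}}\frac{n!}{z_{\mu}}\,\langle s_{\lambda},p_{\mu}\rangle
= n!\,\langle s_{\lambda},\mathcal E_{n}\rangle,\\
&\qquad\text{where}\qquad \mathcal E:=\exp\Bigl(\sum_{k\ge 2}\tfrac{p_{k}}{k}\Bigr)=e^{-p_{1}}\prod_{i}\frac{1}{1-x_{i}},
\end{align*}
$\mathcal E_{n}$ being the degree-$n$ homogeneous component; the last equality is the classical identity $\sum_{\mu}z_{\mu}^{-1}p_{\mu}=\prod_{i}(1-x_{i})^{-1}$ restricted to partitions with no part equal to $1$, the factor $e^{-p_{1}}$ encoding the absence of fixed points. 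Skewing by $\prod_{i}(1-x_{i})^{-1}=\sum_{m}h_{m}$ then gives the closed form $\chi_{\lambda}(1)\eta_{\lambda}=n!\sum_{\mu}(-1)^{|\mu|}\chi_{\mu}(1)/|\mu|!$, the sum over $\mu\subseteq\lambda$ with $\lambda/\mu$ a horizontal strip; in particular $\eta_{\emptyset}=1$ and $\eta_{(1)}=0$. The structural fact that carries the argument is that $\mathcal E$ is \emph{independent of $p_{1}$}, i.e.\ $p_{1}^{\perp}\mathcal E=0$ (and, similarly, $p_{k}^{\perp}\mathcal E=\mathcal E$ for $k\ge 2$), where $p_{k}^{\perp}$ is the adjoint of multiplication by $p_{k}$.

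Now observe that $\lambda-\w{c}$ is $\lambda$ with its first column deleted, while $\lambda-\w{h}$ is $\lambda-\w{c}$ with its first row then deleted; and each deletion is governed by a clean Pieri-type identity: $e_{c}^{\perp}s_{\lambda}=s_{\lambda-\w{c}}$ (the unique vertical $c$-strip of a $c$-row shape strips the last cell from every row), and $h_{\lambda_{1}}^{\perp}s_{\lambda}=s_{\bar\lambda}$ with $\bar\lambda=(\lambda_{2},\dots,\lambda_{r})$ (the unique horizontal $\lambda_{1}$-strip occupying all $\lambda_{1}$ columns strips the top cell from every column). The plan is to induct on $|\lambda|$, insert these two peelings into $\langle s_{\lambda},\mathcal E_{n}\rangle$, and use $p_{1}^{\perp}\mathcal E=0$ and $p_{k}^{\perp}\mathcal E=\mathcal E$ $(k\ge 2)$ to annihilate the spurious contributions, leaving only the two eigenvalues $\eta_{\lambda-\w{h}}$ and $\eta_{\lambda-\w{c}}$.

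The heart of the matter, and the step I expect to be the main obstacle, is the bookkeeping in this last reduction: proving that after the two peels \emph{nothing} survives beyond these two terms, and that they occur with exactly the signs $(-1)^{h}$ and $(-1)^{h+\lambda_{1}}$ and the integer coefficient $h=\lambda_{1}+c-1$. The sign $(-1)^{h+\lambda_{1}}=(-1)^{c-1}$ should drop out as the leg length of the principal hook $\w{h}$, and the coefficient $h$ should arise from the $\lambda_{1}$ cells of the first row together with the $c$ cells of the first column (overlapping in a single corner cell) — the collapse of all remaining terms being forced precisely by the $p_{1}$-freeness of $\mathcal E$, that is, by the derangement condition. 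I would conclude by verifying the base cases $\eta_{\emptyset}=1$, $\eta_{(1)}=0$ and checking consistency for $\lambda=(n)$, which recovers the classical recurrence $D_{n}=nD_{n-1}+(-1)^{n}$, and for $\lambda=(1^{n})$, which gives $\sum_{\sigma\in\mathcal D_{n}}\sign(\sigma)=(-1)^{n}(1-n)$.
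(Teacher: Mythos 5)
First, a remark on scope: the paper offers no proof of Theorem~\ref{Ren} to compare against --- it is imported verbatim from Renteln \cite{Renteln}. Judged on its own terms, your set-up is sound and is in fact Renteln's own starting point: the identity $\chi_{\lambda}(1)\eta_{\lambda}=n!\,\langle s_{\lambda},\mathcal E\rangle$ with $\mathcal E=\exp(\sum_{k\ge 2}p_{k}/k)=\prod_{i}e^{-x_{i}}/(1-x_{i})$ is correct, as are the horizontal-strip closed form, the $p_{1}$-freeness of $\mathcal E$, and the two Pieri peelings $e_{c}^{\perp}s_{\lambda}=s_{\lambda-\w{c}}$ and $h_{\lambda_{1}}^{\perp}s_{\lambda}=s_{(\lambda_{2},\dots,\lambda_{r})}$; I verified the closed form on $(n)$ and $(1,1)$ and it matches the tables.

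The difficulty is that the proof stops exactly where the theorem begins. The step you yourself flag as ``the main obstacle'' --- that after the two peelings nothing else survives, with coefficients exactly $(-1)^{h}$ and $(-1)^{h+\lambda_{1}}h$ --- is the entire content of the recurrence, and the mechanism you propose does not obviously close. Concretely: (i) the pairing computes $d_{\lambda}\eta_{\lambda}/n!$, not $\eta_{\lambda}$, whereas the recurrence relates $\eta_{\lambda}$, $\eta_{\lambda-\w{h}}$, $\eta_{\lambda-\w{c}}$ directly; any derivation via $\langle s_{\lambda-\w{c}},\mathcal E\rangle=\langle s_{\lambda},e_{c}\mathcal E\rangle$ must therefore also control the ratios $d_{\lambda}(n-c)!/(d_{\lambda-\w{c}}\,n!)$ and $d_{\lambda}(n-h)!/(d_{\lambda-\w{h}}\,n!)$, and these dimension/factorial ratios are precisely where the coefficient $h$ and the signs come from --- your sketch never engages them. (ii) ``Using $p_{1}^{\perp}\mathcal E=0$ to annihilate the spurious contributions'' is not yet a mechanism: $p_{1}^{\perp}$ does not arise when you skew by $e_{c}$ or $h_{\lambda_{1}-1}$, and the Pieri expansions of $e_{c}\mathcal E$ and $h_{\lambda_{1}-1}e_{c}\mathcal E$ against $s_{\lambda}$ produce many shapes besides $\lambda-\w{c}$ and $\lambda-\w{h}$ whose cancellation you would have to prove. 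A workable route from your own closed form is the one Renteln actually takes: since $\mathcal E=\prod_{i}f(x_{i})$ with $f(t)=\sum_{k}D_{k}t^{k}/k!$, the Jacobi--Trudi expansion gives $d_{\lambda}\eta_{\lambda}/n!=\de\bigl[D_{\lambda_{i}-i+j}/(\lambda_{i}-i+j)!\bigr]$, and the hook recurrence then follows from row/column manipulations of this determinant using $D_{k}=kD_{k-1}+(-1)^{k}$ together with the hook length formula. As written, your argument establishes the closed form and the base cases, but not the theorem.
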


\begin{thm}[\cite{Ku-Wales} The Alternating Sign Property (ASP) ]\label{alternating-sign}
Let $n>1$. For any partition $\lambda = (\lambda_{1}, \ldots, \lambda_{r}) \vdash n$,
\begin{eqnarray}
\sign(\eta_{\lambda}) & = & (-1)^{|\lambda|-\lambda_{1}} \nonumber \\
& = & (-1)^{\# \textnormal{cells under the first row of $\lambda$}}
\end{eqnarray}
where $\sign(\eta_\lambda)$ is $1$ if $\eta_{\lambda}$ is positive or $-1$ if $\eta_{\lambda}$ is negative.
\end{thm}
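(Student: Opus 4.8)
The plan is to rewrite $\eta_{\lambda}$ as an alternating sum of positive, weakly increasing integers, so that its sign is forced by the last term. Put $n=|\lambda|$, write $f^{\mu}$ for the number of standard Young tableaux of a straight or skew shape $\mu$, let $(j)$ denote the one-row partition of size $j$, and write $\lambda/(j)$ for the corresponding skew shape (the empty shape when $j>\lambda_{1}$). Starting from Theorem~\ref{cayley} in the form $\eta_{\lambda}=\frac{1}{f^{\lambda}}\sum_{\sigma\in\mathcal D_{n}}\chi_{\lambda}(\sigma)$, I would expand the derangement sum by inclusion--exclusion over the fixed-point set: for $S\subseteq[n]$ the permutations fixing $S$ pointwise form a copy of $\mathcal S_{n-|S|}$ acting on $[n]\setminus S$, and iterating the branching rule gives $\sum_{\sigma:\,S\subseteq\mathrm{Fix}(\sigma)}\chi_{\lambda}(\sigma)=(n-|S|)!\,f^{\lambda/(n-|S|)}$, since the multiplicity of the trivial character in the restriction of $\chi_{\lambda}$ to $\mathcal S_{n-|S|}$ equals $f^{\lambda/(n-|S|)}$. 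Summing over $S$ and reindexing by $j=n-|S|$ yields
\begin{equation}
\eta_{\lambda}=\frac{(-1)^{n}}{f^{\lambda}}\sum_{j=0}^{\lambda_{1}}(-1)^{j}\,\frac{n!}{(n-j)!}\,f^{\lambda/(j)}.
\label{eq:plan-id}
\end{equation}
(One can instead verify \eqref{eq:plan-id} against Renteln's recurrence \eqref{1-recurrence} by induction, but the character computation is shorter.) Set $a_{j}:=\frac{n!}{(n-j)!}f^{\lambda/(j)}$, a positive integer for $0\le j\le\lambda_{1}$; then, as long as the sum in \eqref{eq:plan-id} is nonzero, $\sign(\eta_{\lambda})=(-1)^{n}\,\sign\big(\sum_{j=0}^{\lambda_{1}}(-1)^{j}a_{j}\big)$.

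The heart of the argument is the chain $a_{0}<a_{1}\le a_{2}\le\cdots\le a_{\lambda_{1}}$, equivalently $f^{\lambda/(j)}\le(n-j)\,f^{\lambda/(j+1)}$ for $0\le j\le\lambda_{1}-1$, strict for $j=0$. I would prove this with an explicit injection. For $j\le\lambda_{1}-1$ the cell $(1,j+1)$ lies in $\lambda$, and adjoining it to $(j)$ produces the partition $(j+1)$, so $\lambda/(j)=\lambda/(j+1)\sqcup\{(1,j+1)\}$. Given a standard Young tableau $T$ of $\lambda/(j)$, map it to the pair $(v,W)$ where $v:=T(1,j+1)\in\{1,\dots,n-j\}$ and $W$ is the standardization of the filling of $\lambda/(j+1)$ obtained from $T$ by deleting the cell $(1,j+1)$. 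This is injective: from $(v,W)$ one recovers $v$, undoes the standardization by raising every entry of $W$ that is $\ge v$ by one, and reinserts $v$ at $(1,j+1)$, recovering $T$. Hence $f^{\lambda/(j)}\le(n-j)f^{\lambda/(j+1)}$. When $j=0$ the cell $(1,1)$ necessarily carries the entry $1$, so $v=1$ is forced and the image meets only $\{1\}\times\mathrm{SYT}(\lambda/(1))$; since $n\ge2$ (this is exactly where the hypothesis $n>1$ enters) the containment is proper, giving $a_{0}<a_{1}$.

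It remains to extract the sign. Writing $S:=\sum_{j=0}^{\lambda_{1}}(-1)^{j}a_{j}$ and pairing consecutive terms, monotonicity yields, for $\lambda_{1}$ even,
\[
S=a_{0}+\sum_{i=1}^{\lambda_{1}/2}(a_{2i}-a_{2i-1})\ \ge\ a_{0}=f^{\lambda}>0,
\]
and for $\lambda_{1}$ odd,
\[
S=(a_{0}-a_{1})+\sum_{i=1}^{(\lambda_{1}-1)/2}(a_{2i}-a_{2i+1})\ \le\ a_{0}-a_{1}<0.
\]
Either way $\sign(S)=(-1)^{\lambda_{1}}$ (so in particular $S\ne0$, which retroactively justifies the sign formula for $\eta_{\lambda}$), and \eqref{eq:plan-id} gives $\sign(\eta_{\lambda})=(-1)^{n}(-1)^{\lambda_{1}}=(-1)^{n-\lambda_{1}}=(-1)^{|\lambda|-\lambda_{1}}$; the reformulation as $(-1)^{\#\,\text{cells below the first row of }\lambda}$ is immediate.

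The main obstacle, and the only genuinely nonroutine step, is the monotonicity $a_{0}<a_{1}\le\cdots\le a_{\lambda_{1}}$ and in particular finding the standardization injection above and checking it is well defined; the other point needing care is the branching-rule identity $\sum_{\sigma:\,S\subseteq\mathrm{Fix}(\sigma)}\chi_{\lambda}(\sigma)=(n-|S|)!\,f^{\lambda/(n-|S|)}$ behind \eqref{eq:plan-id}. Granted these, the sign extraction is purely formal bookkeeping.
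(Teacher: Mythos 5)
Your proof is correct, but it is a genuinely different argument from the one in the paper. The paper deduces the ASP by a three-line induction on $|\lambda|$ from its new recurrence $\eta_{\lambda}=(-1)^{r-1}\lambda_{r}\eta_{\lambda-\w{c}}+(-1)^{\lambda_{r}}\eta_{\lambda-\w{l}}$ (Theorem \ref{formula}): by the inductive hypothesis both summands carry the sign $(-1)^{|\lambda|-\lambda_{1}}$, so their sum does too. You instead work directly from the expansion $\eta_{\lambda}=\frac{(-1)^{n}}{f^{\lambda}}\sum_{j}(-1)^{j}\frac{n!}{(n-j)!}f^{\lambda/(j)}$ --- which is exactly Renteln's formula, Theorem \ref{Renteln-2} of the paper, so you could simply have cited it rather than rederiving it by inclusion--exclusion and the branching rule --- and you make the alternating sum's sign visible by proving the terms $a_{j}=\frac{n!}{(n-j)!}f^{\lambda/(j)}$ satisfy $a_{0}<a_{1}\le\cdots\le a_{\lambda_{1}}$ via a standardization injection $\mathrm{SYT}(\lambda/(j))\hookrightarrow\{1,\dots,n-j\}\times\mathrm{SYT}(\lambda/(j+1))$, after which pairing consecutive terms gives $\sign(S)=(-1)^{\lambda_{1}}$; I checked the injection is well defined and invertible, the strict inequality $a_{0}<a_{1}$ is where $n>1$ enters, and the two parity cases of the pairing are handled correctly. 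What each route buys: the paper's proof is shorter but leans on the shifted-Schur-function machinery needed to establish Theorem \ref{formula}; yours needs only Renteln's dimension formula plus an elementary combinatorial injection, avoids induction on partitions entirely, and as a bonus yields the quantitative lower bounds $|\eta_{\lambda}|\ge f^{\lambda}/f^{\lambda}=1$ for $\lambda_1$ even and $|\eta_{\lambda}|\ge(a_{1}-a_{0})/f^{\lambda}$ for $\lambda_1$ odd, which the paper's sign-only induction does not give.
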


It turns out that the two terms on the right-hand side of Renteln's formula (\ref{1-recurrence}) can have different signs. This is the source of difficulty in the proof of the ASP by Ku and Wales which relies mainly on the recurrence. Our recurrence formula does not have this problem, thus giving a `quicker' proof of the ASP.

To state our results, we need a new terminology. For a partition $\lambda =(\lambda_{1}, \ldots, \lambda_{r}) \vdash n$, let $\w{l_{\lambda}}$ denote the last row of $\lambda$ and let $l_{\lambda}$ denote the size of $\w{l_{\lambda}}$. Clearly, $l_{\lambda} = \lambda_{r}$. Also, let $\lambda-\w{l_{\lambda}}$ denote the partition obtained from $\lambda$ by deleting the last row. When $\lambda$ is clear from the context, we replace $\w{l_{\lambda}}$, $l_{\lambda}$ by $\w{l}$ and $l$ respectively.

\begin{thm}\label{formula}
Let $\lambda=(\lambda_{1}, \ldots, \lambda_{r}) \vdash n$. The eigenvalues of the derangement graph $\Gamma_{n}$ satisfy the following recurrence:
\begin{eqnarray}
\eta_{\lambda} & = & (-1)^{r-1}\lambda_{r} \eta_{\lambda-\hat{c}} + (-1)^{\lambda_{r}} \eta_{\lambda-\hat{l}} \label{main-recurrence}
\end{eqnarray}
with initial condition $\eta_{\emptyset} = 1$.
\end{thm}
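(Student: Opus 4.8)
The plan is to route the proof of Theorem~\ref{formula} through shifted Schur functions, as the abstract promises. The key preliminary fact is the expression of the eigenvalues through the one-row shifted Schur functions $s^{*}_{(j)}$ (the $j$-th shifted complete homogeneous functions):
\[
\eta_{\lambda}\;=\;(-1)^{|\lambda|}\sum_{j\ge 0}(-1)^{j}\,s^{*}_{(j)}(\lambda),
\qquad s^{*}_{(j)}(\lambda):=s^{*}_{(j)}(\lambda_{1},\dots,\lambda_{r},0,0,\dots),
\]
the sum being finite since $s^{*}_{(j)}(\lambda)=0$ for $j>\lambda_{1}$. If this is not already on record, I would prove it by inclusion--exclusion on the fixed-point set: $\sum_{\sigma\in\mathcal{D}_{n}}\chi_{\lambda}(\sigma)=\sum_{k}(-1)^{k}\binom{n}{k}\Sigma_{k}$ with $\Sigma_{k}=\sum_{\sigma|_{T}=\mathrm{id}}\chi_{\lambda}(\sigma)$ for a fixed $k$-set $T$, then $\Sigma_{k}=(n-k)!\,f^{\lambda/(n-k)}$ by iterated branching (here $f^{\lambda/\nu}$ is the number of standard Young tableaux of skew shape $\lambda/\nu$ and $(n-k)$ is the one-row partition), and finally divide by $\chi_{\lambda}(1)=f^{\lambda}$ and substitute $j=n-k$, using the Okounkov--Olshanski evaluation $s^{*}_{(j)}(\lambda)=\frac{n!}{(n-j)!}f^{\lambda/(j)}/f^{\lambda}$.

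The heart of the argument is a one-variable branching identity: for $j\ge 1$, $r\ge 1$,
\[
s^{*}_{(j)}(x_{1},\dots,x_{r})\;=\;s^{*}_{(j)}(x_{1},\dots,x_{r-1})\;+\;x_{r}\,s^{*}_{(j-1)}(x_{1}-1,\dots,x_{r}-1).\qquad(\star)
\]
I would deduce $(\star)$ from the combinatorial formula $s^{*}_{(j)}(x_{1},\dots,x_{r})=\sum_{r\ge i_{1}\ge\cdots\ge i_{j}\ge 1}\prod_{k=1}^{j}(x_{i_{k}}-k+1)$: split the index tuples according to whether $i_{1}\le r-1$ (these contribute exactly $s^{*}_{(j)}(x_{1},\dots,x_{r-1})$) or $i_{1}=r$ (factor out $(x_{r}-0)=x_{r}$ and reindex $k\mapsto k-1$, so that each remaining $(x_{i_{k}}-(k-1))$ becomes $((x_{i_{k}}-1)-(k-2))$ and the sum reassembles into $x_{r}\,s^{*}_{(j-1)}(x_{1}-1,\dots,x_{r}-1)$).

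Then I assemble. With $\lambda-\hat{l}=(\lambda_{1},\dots,\lambda_{r-1})$, $\lambda-\hat{c}=(\lambda_{1}-1,\dots,\lambda_{r}-1)$, and using stability of $s^{*}_{(j)}$ under appending a zero variable, $(\star)$ evaluated at $(\lambda_{1},\dots,\lambda_{r})$ reads $s^{*}_{(j)}(\lambda)-s^{*}_{(j)}(\lambda-\hat{l})=\lambda_{r}\,s^{*}_{(j-1)}(\lambda-\hat{c})$ for $j\ge1$ (the cases $r=1$ and $\lambda_{r}=1$ needing no separate treatment, and the $j=0$ difference being $0$). Substituting the first display for each of $\eta_{\lambda},\eta_{\lambda-\hat{c}},\eta_{\lambda-\hat{l}}$, using $|\lambda-\hat{c}|=|\lambda|-r$ and $|\lambda-\hat{l}|=|\lambda|-\lambda_{r}$, and cancelling $(-1)^{|\lambda|}$, the asserted recurrence becomes $\sum_{j\ge0}(-1)^{j}(s^{*}_{(j)}(\lambda)-s^{*}_{(j)}(\lambda-\hat{l}))=-\lambda_{r}\sum_{j\ge0}(-1)^{j}s^{*}_{(j)}(\lambda-\hat{c})$, whose left side equals $\sum_{j\ge1}(-1)^{j}\lambda_{r}s^{*}_{(j-1)}(\lambda-\hat{c})=-\lambda_{r}\sum_{k\ge0}(-1)^{k}s^{*}_{(k)}(\lambda-\hat{c})$, i.e.\ the right side; and $\eta_{\emptyset}=1$ is consistent because $s^{*}_{(j)}(\emptyset)=\delta_{j,0}$.

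I expect the main difficulty to be bookkeeping rather than any genuine obstacle: one must fix the exact normalization in the $s^{*}$-formula for $\eta_{\lambda}$ and the exact shift convention in the combinatorial formula for $s^{*}_{(j)}$ so that $(\star)$ appears in precisely the form that cancels the $\lambda-\hat{c}$ term (a neighbouring convention gives a true identity that is not the one needed), and then carry the parities $(-1)^{|\lambda|},(-1)^{|\lambda|-r},(-1)^{|\lambda|-\lambda_{r}}$ through cleanly. A more elementary-looking alternative---deducing Theorem~\ref{formula} directly from Renteln's recurrence (Theorem~\ref{Ren}) by induction on $|\lambda|$, re-expanding both sides via Theorem~\ref{Ren} together with the inductive hypothesis---should also work, but it seems to require several case distinctions ($r=1$; $\lambda_{r}=1$ versus $\lambda_{r}\ge2$; small base cases) and looks considerably more laborious.
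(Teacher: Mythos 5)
Your proposal is correct and follows essentially the same route as the paper: it uses the identity $\eta_{\lambda}=\sum_{k}(-1)^{n-k}s^{*}_{(k)}(\lambda)$ (the paper's Corollary~\ref{shift}, obtained from Renteln's formula together with the Okounkov--Olshanski dimension formula), combined with the same one-variable branching recurrence for the complete shifted symmetric functions $h^{*}_{k}=s^{*}_{(k)}$ derived from their combinatorial expansion, and the same sign bookkeeping. The only cosmetic difference is that you offer to reprove the eigenvalue formula by inclusion--exclusion where the paper simply cites Renteln.
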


It follows from the ASP that both of the terms on the right-hand side of (\ref{main-recurrence}) have the same sign.

Let $\lambda=(\lambda_{1}, \ldots, \lambda_{r}), \lambda'=(\lambda_{1}', \ldots, \lambda_{r}')\vdash n$. We write $\lambda <_{\textnormal{lex}} \lambda'$, if there is a $m$, $1\leq m\leq r$ such that $\lambda_i=\lambda_i'$ for all $1\leq i\leq m-1$ and $\lambda_m<\lambda_m'$. Note that `$<_{\textnormal{lex}}$' is the usual lexicographic ordering on the partitions of $n$.

Let $\lambda, \lambda'\vdash n$ with $\lambda_{1}$ as
their first part. In general, $\lambda <_{\textnormal{lex}} \lambda'$ does not imply that $\vert \eta_{\lambda}\vert<\vert \eta_{\lambda'}\vert$. This has been pointed out in \cite[Remark 1.4]{Ku-Wales}. One of our main contributions in this paper is to show that such property holds with respect to the {\em dominance order}. Recall that if $\lambda$ and $\lambda'$ are partitions, we say that $\lambda$ is {\em dominated} by $\lambda'$, and write $\lambda \unlhd \lambda'$, if $\lambda_{1} + \lambda_{2} + \cdots + \lambda_{k} \le \lambda'_{1} + \lambda'_{2} + \cdots + \lambda'_{k}$ for all positive integer $k$.

We give a more intuitive interpretation of the dominance order as follows. Recall that an {\em outside corner} of a partition $\lambda$ is a box $(x,y)$ of $\lambda$  such that neither $(x+1,y)$ nor $(x, y+1)$ are boxes of $\lambda$. On the other hand, define an {\em inside corner} of $\lambda$ as a location $(x,y)$ which is not a box of $\lambda$, such that either $y=1$ and $(x-1, y)$ is a box of $\lambda$, $x=1$ and $(x, y-1)$ is a box of $\lambda$, or $(x-1, y)$ and $(x, y-1)$ are boxes of $\lambda$. For example, in the following diagram of the partition $(4,3,1,1)$, the outside corners are marked with an `o' and the inside corners with an `i':

\begin{center}
\begin{young}
 &  &  &  o & ,i\\
 & &  o & ,i \\
    &, i \\
 o &, \\
 ,i
\end{young}
\end{center}
Let $\lambda, \lambda'\vdash n$. We write $\lambda <_1 \lambda'$, if there are $m_1$ and $m_2$, $1\leq m_1<m_2\leq r$ such that
\begin{align}
\lambda &=(\lambda_{1}, \ldots,\lambda_{m_1-1},\lambda_{m_1},\lambda_{m_1+1}, \ldots,\lambda_{m_2-1},\lambda_{m_2},\lambda_{m_2+1}, \ldots,\lambda_{r}),\notag\\
\lambda' &=(\lambda_{1}, \ldots,\lambda_{m_1-1},\lambda_{m_1}+1,\lambda_{m_1+1}, \ldots,\lambda_{m_2-1},\lambda_{m_2}-1,\lambda_{m_2+1}, \ldots,\lambda_{r})\notag
\end{align}
are partitions of $n$. Intuitively, $\lambda <_1 \lambda'$ corresponds to sliding
an outside corner of $\lambda$ upwards into an inside corner of $\lambda'$.

It turns out that the dominance order can be entirely characterized in terms of the partial ordering $<_1$. We shall omit the proof of this standard result.

\begin{lm}\label{raise}
Let $\mu$ and $\lambda$ be partitions of $n$. Then $\mu \unlhd \lambda$ if and only if there exist $\mu^{(1)}, \ldots, \mu^{(s)} \vdash n$ such that
\[ \mu <_1 \mu^{(1)} <_1 \cdots <_1 \mu^{(s)} <_1 \lambda. \]
\end{lm}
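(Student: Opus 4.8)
The plan is to prove the two implications of Lemma~\ref{raise} separately; the implication ``$\Leftarrow$'' is routine and ``$\Rightarrow$'' carries all the content. For ``$\Leftarrow$'' it suffices to check that a single step $\lambda <_1 \lambda'$ already forces $\lambda \lhd \lambda'$, after which transitivity of the dominance order disposes of any chain $\mu <_1 \mu^{(1)} <_1 \cdots <_1 \mu^{(s)} <_1 \lambda$. Indeed, if $\lambda'$ is obtained from $\lambda$ by adding a box to row $m_1$ and deleting a box from row $m_2>m_1$, then $\lambda'_1+\cdots+\lambda'_k$ agrees with $\lambda_1+\cdots+\lambda_k$ for $k<m_1$ and for $k\ge m_2$, and exceeds it by exactly $1$ for $m_1\le k<m_2$; hence $\lambda\unlhd\lambda'$ and $\lambda\neq\lambda'$.

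For ``$\Rightarrow$'', I would assume $\mu\unlhd\lambda$ with $\mu\neq\lambda$ (the case $\mu=\lambda$ being vacuous), set $e_k:=(\lambda_1+\cdots+\lambda_k)-(\mu_1+\cdots+\mu_k)\ge 0$ and $d(\mu,\lambda):=\sum_{k\ge 1}e_k$, and induct on $d(\mu,\lambda)$. The induction reduces to producing a single step $\mu<_1\nu$ with $\nu\unlhd\lambda$ and $d(\nu,\lambda)<d(\mu,\lambda)$: applying the inductive hypothesis to $\nu\unlhd\lambda$ (or noting $\nu=\lambda$ when $d(\nu,\lambda)=0$) and prepending $\mu<_1\nu$ then yields the desired chain. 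Since $\mu\neq\lambda$, some $e_k>0$, so there is a ``valley'' of the sequence $(e_k)_{k\ge 0}$, i.e. indices $a<b$ with $e_a=e_b=0$, with $e_k>0$ for all $a<k<b$, and necessarily $b\ge a+2$. I would then let $\nu$ be obtained from $\mu$ by adding a box to row $a+1$ and removing a box from row $b$.

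Three verifications then finish the step. First, $\nu$ is again a partition: when $a\ge 1$, combining $e_a=0$ with $e_{a-1}\ge 0$ gives $\mu_a\ge\lambda_a$, while $e_a=0<e_{a+1}$ gives $\mu_{a+1}<\lambda_{a+1}\le\lambda_a\le\mu_a$, so $\mu_a>\mu_{a+1}$ and a box may legally be added to row $a+1$ (this is automatic if $a=0$); symmetrically $e_{b-1}>0=e_b$ gives $\mu_b>\lambda_b$, and $e_b=0$ with $e_{b+1}\ge 0$ gives $\mu_{b+1}\le\lambda_{b+1}\le\lambda_b<\mu_b$, so $\mu_b>\mu_{b+1}$ and a box may legally be removed from row $b$; as $a+1<b$ these are distinct rows. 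Second, $\nu\unlhd\lambda$: the partial sums of $\nu$ exceed those of $\mu$ by exactly $1$ on the range $a+1\le k\le b-1$, which lies inside $(a,b)$ where $e_k>0$, hence they still do not exceed the partial sums of $\lambda$. Third, $d(\nu,\lambda)=d(\mu,\lambda)-(b-a-1)<d(\mu,\lambda)$, since $b-a-1\ge 1$.

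The step I expect to be the main obstacle is precisely this construction in the forward direction. The naive attempt -- add a box to the first row where $\mu$ falls short of $\lambda$ and delete one from the last row where $\mu$ exceeds $\lambda$ -- need not remain below $\lambda$: for $\mu=(4,4,2,2,1)\unlhd(5,3,3,1,1)=\lambda$ it produces the partition $(5,4,2,1,1)$, whose partial sum at $k=2$ is $9>8$. Restricting the modification to a single valley of $(e_k)$ is what avoids crossing a point where $e_k=0$, and the delicate part is observing that the valley endpoints automatically force the strict inequalities $\mu_a>\mu_{a+1}$ and $\mu_b>\mu_{b+1}$ needed to keep $\nu$ a partition; everything else is bookkeeping on partial sums.
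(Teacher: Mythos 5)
The paper deliberately omits the proof of Lemma~\ref{raise} ("we shall omit the proof of this standard result"), so there is nothing to compare against; your argument is a correct and complete proof, and it is essentially the classical one (induction on $\sum_k e_k$, raising a box across a single maximal run where the partial-sum deficits $e_k$ are strictly positive). All three verifications check out: the endpoint identities $e_a=0<e_{a+1}$ and $e_{b-1}>0=e_b$ do force $\mu_a\ge\lambda_a\ge\lambda_{a+1}>\mu_{a+1}$ and $\mu_b>\lambda_b\ge\lambda_{b+1}\ge\mu_{b+1}$, so $\nu$ is a partition, and confining the modification to the interval where $e_k>0$ keeps $\nu\unlhd\lambda$ while strictly decreasing $d$. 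Your counterexample $(4,4,2,2,1)\unlhd(5,3,3,1,1)$ correctly pinpoints why the naive first-deficit/last-excess choice fails.
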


Using the recurrence given by Theorem \ref{formula}, we are able to prove Theorem \ref{thm_inequality} and then settle affirmatively the conjecture of Ku and Wales regarding the lower and upper bounds for the absolute values  of the eigenvalues of $\Gamma_{n}$ (Theorem \ref{Bounds}).

\begin{thm}\label{thm_inequality}
Let $\lambda, \lambda' \vdash n$ with $\lambda_{1}$ as
their first part. If $\lambda \unlhd \lambda'$, then
\begin{equation}
\vert \eta_{\lambda}\vert< \vert \eta_{\lambda'}\vert.\notag
\end{equation}
\end{thm}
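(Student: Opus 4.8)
By Lemma \ref{raise}, it suffices to prove the inequality in the elementary case $\lambda <_1 \lambda'$; the general statement then follows by transitivity along a chain $\lambda <_1 \mu^{(1)} <_1 \cdots <_1 \lambda'$. So fix $\lambda = (\lambda_1,\ldots,\lambda_r)$ and $\lambda'$ obtained from $\lambda$ by adding $1$ to part $m_1$ and subtracting $1$ from part $m_2$, with $1 \le m_1 < m_2 \le r$. Note that $\lambda$ and $\lambda'$ have the same first part (since $m_1 \ge 1$ but the first part is unchanged unless $m_1 = 1$, in which case we must have $\lambda_1 = \lambda_2$ beforehand and we should track this carefully) and, crucially, the same number of rows unless $m_2 = r$ and $\lambda_r = 1$, which is the case where deleting the last row behaves differently. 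I would first dispose of, or carefully incorporate, this boundary case, then treat the generic case where $r$ is unchanged.

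The plan is to induct on $n$ and use the new recurrence \eqref{main-recurrence}, $\eta_{\lambda} = (-1)^{r-1}\lambda_r\,\eta_{\lambda-\hat{c}} + (-1)^{\lambda_r}\eta_{\lambda-\hat{l}}$, together with the ASP (Theorem \ref{alternating-sign}), which tells us both terms on the right have the same sign, so $|\eta_\lambda| = \lambda_r|\eta_{\lambda-\hat c}| + |\eta_{\lambda-\hat l}|$ (all quantities with a fixed first part, so the inductive hypothesis applies to them). The key observation is that the operation $\lambda \mapsto \lambda - \hat{c}$ (delete first column) is compatible with $<_1$: if $\lambda <_1 \lambda'$ via indices $(m_1, m_2)$ with $m_2 < r$, then $\lambda - \hat c <_1 \lambda' - \hat c$ (or they are equal when a part drops to $0$), and $\lambda - \hat l$ versus $\lambda' - \hat l$ is also controlled — deleting the last row either leaves the $<_1$ relation intact (when $m_2 < r$) or, when $m_2 = r$, relates $\lambda - \hat l = (\lambda_1,\ldots,\lambda_{r-1})$ to $\lambda' - \hat l = (\lambda_1,\ldots,\lambda_{m_1}+1,\ldots,\lambda_{r-1})$, again a $<_1$-type move (sliding a corner up to row $m_1$ from row $r-1$, or from row $r$ if $\lambda_r \ge 2$). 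Moreover $l_{\lambda} = \lambda_r \le \lambda'_r = l_{\lambda'}$ when $m_2 < r$, while $l_\lambda = \lambda_r \ge \lambda_r - 1 = l_{\lambda'}$ when $m_2 = r$; this sign flip in how the coefficient $\lambda_r$ compares is exactly why the two sub-cases $m_2 < r$ and $m_2 = r$ must be handled separately.

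Concretely: in the case $m_2 < r$ (the last part is untouched, so $r$, $\lambda_r$ are preserved and $\lambda_r = \lambda'_r$), we get $\lambda - \hat c \unlhd \lambda' - \hat c$ and $\lambda - \hat l \unlhd \lambda' - \hat l$ as partitions of $n - r$ and $n - \lambda_r$ respectively (with the same first parts — one must check the first part of $\lambda - \hat l$ is still unchanged, which holds since $m_1 < m_2 < r$ so row $1$ and the deletion are disjoint; when $m_1 = 1$ one checks $\lambda_1 = \lambda_2$ forces the first part of $\lambda-\hat l$ to match too), so by induction $|\eta_{\lambda-\hat c}| \le |\eta_{\lambda'-\hat c}|$ and $|\eta_{\lambda-\hat l}| \le |\eta_{\lambda'-\hat l}|$ with at least one strict (from a nonempty chain), giving $|\eta_\lambda| = \lambda_r|\eta_{\lambda-\hat c}| + |\eta_{\lambda-\hat l}| < \lambda'_r|\eta_{\lambda'-\hat c}| + |\eta_{\lambda'-\hat l}| = |\eta_{\lambda'}|$. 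In the case $m_2 = r$ with $\lambda_r \ge 2$ we have $r' = r$, $\lambda'_r = \lambda_r - 1$; here $\lambda - \hat c \unlhd \lambda' - \hat c$ still (now with $m_2 = r$), and $\lambda - \hat l$ versus $\lambda' - \hat l$ compares $(\lambda_1,\ldots,\lambda_{r-1})$ and $(\lambda_1,\ldots,\lambda_{m_1}+1,\ldots,\lambda_{r-1}-0)$ — wait, one must recompute: $\lambda'-\hat l$ drops the last part $\lambda_r - 1$, leaving $(\lambda_1,\ldots,\lambda_{m_1}+1,\ldots,\lambda_{r-1})$, which dominates $(\lambda_1,\ldots,\lambda_{r-1}) = \lambda - \hat l$. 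So $|\eta_{\lambda-\hat l}| \le |\eta_{\lambda'-\hat l}|$, and $\lambda_r|\eta_{\lambda-\hat c}| + |\eta_{\lambda-\hat l}|$ versus $(\lambda_r-1)|\eta_{\lambda'-\hat c}| + |\eta_{\lambda'-\hat l}|$: the coefficient went \emph{down} by one, so I need the gain $\lambda_r(|\eta_{\lambda'-\hat c}| - |\eta_{\lambda-\hat c}|) + (|\eta_{\lambda'-\hat l}| - |\eta_{\lambda-\hat l}|)$ to exceed $|\eta_{\lambda'-\hat c}|$. This is the crux and will require a separate quantitative estimate — likely a second induction showing something like $|\eta_\mu| \ge |\eta_{\mu-\hat c}|$ or a monotonicity of $|\eta_\mu|$ under adding a row, so that the "extra row worth" of $\lambda-\hat c$ compensates the lost coefficient. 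The remaining boundary case $m_2 = r$, $\lambda_r = 1$ (so $\lambda'$ has one fewer row) is similar but uses $\eta_{\lambda} = -\eta_{\lambda-\hat c} + \eta_{\lambda - \hat l}$ with $\lambda - \hat l$ of size $n-1$.

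The main obstacle, as flagged above, is precisely the sub-case $m_2 = r$ with $\lambda_r \ge 2$: losing a unit in the leading coefficient $\lambda_r$ of the recurrence must be paid for by the strict growth of the $\eta_{\cdot - \hat c}$ and $\eta_{\cdot - \hat l}$ terms under dominance, and making that bookkeeping work cleanly will probably require proving, as an auxiliary lemma by the same inductive machinery, a lower bound of the form $|\eta_{\mu}| \ge |\eta_{\mu - \hat l}|$ (equivalently that $(-1)^{r-1}\lambda_r\eta_{\mu-\hat c}$ does not overwhelm — it reinforces — the second term), together with the base-case verification for small $n$. I would set up all auxiliary inequalities simultaneously in one induction on $n$ so that the cross-references between them are legitimate.
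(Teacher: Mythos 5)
Your reduction via Lemma \ref{raise} to a single $<_1$ step, the induction on $n$, and the treatment of Case 1 ($m_2<r$) via $|\eta_\lambda|=\lambda_r|\eta_{\lambda-\hat{c}}|+|\eta_{\lambda-\hat{l}}|$ all match the paper's proof (one small simplification: since the theorem assumes $\lambda$ and $\lambda'$ share the same first part, necessarily $m_1\ge 2$, so your worries about $m_1=1$ are moot). But the case $m_2=r$, which you yourself flag as ``the crux,'' is left as an acknowledged gap, and the repair you sketch does not work as stated. Two concrete problems. First, when $m_2=r$ the partitions $\lambda-\hat{l}=(\lambda_1,\ldots,\lambda_{r-1})$ and $\lambda'-\hat{l}=(\lambda_1,\ldots,\lambda_{m_1}+1,\ldots,\lambda_{r-1})$ have \emph{different sizes} ($n-\lambda_r$ versus $n-\lambda_r+1$), so the inductive hypothesis --- which compares partitions of the same integer --- simply does not apply to them; saying one ``dominates'' the other has no content in the dominance order as defined. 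Second, the inequality you need, $\lambda_r\bigl(|\eta_{\lambda'-\hat{c}}|-|\eta_{\lambda-\hat{c}}|\bigr)+\bigl(|\eta_{\lambda'-\hat{l}}|-|\eta_{\lambda-\hat{l}}|\bigr)>|\eta_{\lambda'-\hat{c}}|$, is not delivered by a monotonicity statement of the form $|\eta_\mu|\ge|\eta_{\mu-\hat{c}}|$ or $|\eta_\mu|\ge|\eta_{\mu-\hat{l}}|$: the right-hand side $|\eta_{\lambda'-\hat{c}}|$ can be large while the differences on the left are a priori only bounded below by integrality, so such lemmas give no leverage on the lost unit in the coefficient $\lambda_r$.

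The paper closes this case by a genuinely different mechanism: it unrolls the recurrence (Lemma \ref{recurrence_general}) to write $|\eta_\lambda|=\sum_{k}h^{*}_{k}(\lambda_m,\ldots,\lambda_r)\,|\eta_{(\lambda_1-k,\ldots,\lambda_{m-1}-k)}|$, so that the entire effect of the corner move in the bottom rows is concentrated in the nonnegative coefficients $h^{*}_{k}$, and then proves the term-by-term coefficient inequalities $h^{*}_{k}(\lambda_{r-1},\lambda_r)<h^{*}_{k}(\lambda_{r-1}+1,\lambda_r-1)$ and $h^{*}_{k}(\lambda,\lambda^l,\lambda)<h^{*}_{k}(\lambda+1,\lambda^l,\lambda-1)$ for $2\le k$ (Lemmas \ref{inequality_h1} and \ref{inequality_h2}, yielding Lemmas \ref{lm_special_1} and \ref{lm_special_2}); the general $m_2=r$ case is then reached by inserting an intermediate partition $\lambda''$ and chaining these special cases with Case 1. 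These explicit inequalities on the shifted complete symmetric functions are the real content of the argument, and nothing equivalent appears in your proposal, so as written the proof is incomplete.
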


\begin{thm}[The Ku-Wales Conjecture]\label{Bounds}
Suppose $\lambda^{*}  \vdash n$ is the largest partition in
lexicographic order among all the partitions with $\lambda_{1}$ as
their first part. Then, for every $\lambda = (\lambda_{1}, \ldots,
\lambda_{s}) \vdash n$,
\[|\eta_{(\lambda_{1}, 1^{n-\lambda_{1}})}| \le  |\eta_{\lambda}| \le |\eta_{\lambda^{*}}|. \]
\end{thm}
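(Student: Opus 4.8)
The plan is to deduce Theorem \ref{Bounds} as an essentially immediate corollary of Theorem \ref{thm_inequality}. Fix the first part $\lambda_{1}$ (we may assume $1 \le \lambda_{1} \le n$; if $\lambda_{1}=n$ the set of partitions involved is just $\{(n)\}$ and the statement is trivial), and let $P(n,\lambda_{1})$ denote the set of all partitions of $n$ whose first part equals $\lambda_{1}$. Since Theorem \ref{thm_inequality} asserts that the map $\mu \mapsto |\eta_{\mu}|$ is strictly increasing along the dominance order on $P(n,\lambda_{1})$, the whole statement reduces to the purely combinatorial claim that $(\lambda_{1}, 1^{n-\lambda_{1}})$ is the least element of $P(n,\lambda_{1})$ under $\unlhd$ and that $\lambda^{*}$ is its greatest element.

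For the lower end, let $\mu=(\mu_{1}, \ldots, \mu_{t}) \in P(n,\lambda_{1})$, so $\mu_{1}=\lambda_{1}$ and $\mu_{i}\ge 1$ for all $i\le t$, and set $\nu := (\lambda_{1}, 1^{n-\lambda_{1}})$. The $k$-th partial sum of $\nu$ equals $\min\{\lambda_{1}+(k-1),\, n\}$. If $k\le t$ then $\mu_{1}+\cdots+\mu_{k}\ge \lambda_{1}+(k-1)$, while if $k>t$ then $\mu_{1}+\cdots+\mu_{k}=n$; in either case this is at least the $k$-th partial sum of $\nu$, so $\nu \unlhd \mu$. For the upper end, note first that the lexicographically largest element of $P(n,\lambda_{1})$ is built greedily, giving $\lambda^{*}=(\lambda_{1}^{q}, r)$ where $n=q\lambda_{1}+r$ with $0\le r<\lambda_{1}$ (and $\lambda^{*}=(\lambda_{1}^{q})$ when $r=0$). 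For $\mu\in P(n,\lambda_{1})$ and $k\le q$ we have $\mu_{1}+\cdots+\mu_{k}\le k\lambda_{1}=\lambda^{*}_{1}+\cdots+\lambda^{*}_{k}$ since every part of $\mu$ is at most $\lambda_{1}$; and for $k>q$ the $k$-th partial sum of $\lambda^{*}$ is $n\ge \mu_{1}+\cdots+\mu_{k}$. Hence $\mu \unlhd \lambda^{*}$. (One could instead derive both extremal facts from Lemma \ref{raise} by sliding outside corners of $\nu$ upward to build up an arbitrary $\mu$, and sliding outside corners of $\mu$ upward to reach $\lambda^{*}$, but the partial-sum bookkeeping above is shorter.)

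Combining the two cases, $(\lambda_{1}, 1^{n-\lambda_{1}}) \unlhd \lambda \unlhd \lambda^{*}$ for every $\lambda \in P(n,\lambda_{1})$. Applying Theorem \ref{thm_inequality} to each of these two comparisons — interpreted as an equality of absolute values precisely when the two partitions coincide, which happens exactly when $\lambda$ itself equals $(\lambda_{1}, 1^{n-\lambda_{1}})$ or $\lambda^{*}$ — yields
\[ |\eta_{(\lambda_{1}, 1^{n-\lambda_{1}})}| \le |\eta_{\lambda}| \le |\eta_{\lambda^{*}}|, \]
which is the assertion of Theorem \ref{Bounds}. I do not expect any genuine obstacle at this stage: all the analytic weight sits in Theorem \ref{thm_inequality}, and the only thing that must be verified here is the elementary observation that $(\lambda_{1}, 1^{n-\lambda_{1}})$ and $\lambda^{*}$ are, respectively, the minimum and maximum of $P(n,\lambda_{1})$ in the dominance order — exactly the partial-sum comparison carried out above.
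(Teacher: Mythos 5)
Your proposal is correct and is essentially the paper's own proof: the paper likewise deduces the theorem from Theorem \ref{thm_inequality} by observing that $(\lambda_{1}, 1^{n-\lambda_{1}}) \unlhd \lambda \unlhd \lambda^{*}$ for all $\lambda$ with first part $\lambda_{1}$, merely stating this dominance fact without the partial-sum verification you supply.
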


\begin{proof} It follows from Theorem \ref{thm_inequality} by noting that $(\lambda_{1}, 1^{n-\lambda_{1}}) \unlhd \lambda \unlhd \lambda^{*}$, for all $\lambda \vdash n$, $\lambda\neq \lambda^{*}, (\lambda_{1}, 1^{n-\lambda_{1}}) $.
\end{proof}
Note that it has been shown by Ku and Wales (see \cite[Theorem 1.3]{Ku-Wales}) that the lower bound holds for all $\lambda_{1} \ge \lfloor n/2 \rfloor$.

The paper is organized as follows. In Section 2, we introduce the shifted Schur functions developed by Okounkov and Olshanski \cite{OO} and rewrite a formula of Renteln in terms of these functions. Theorem \ref{formula} will then follow immediately from the property of these shifted Schur functions. Using the new recurrence formula, we provide a simpler proof of the ASP in Section 3. In Section 4, we proved Theorem \ref{thm_inequality}, thus settling a conjecture of Ku and Wales. For the reader's convenience, in Section 5, we reproduce some the eigenvalues of the derangement graphs for small $n$ as given in \cite{Ku-Wales}.

\section{Shifted Schur Functions}

The {\em Schur function} or {\em Schur polynomial} in $n$ variables can be defined as the ratio of two $n \times n$ determinants
\begin{eqnarray}
s_{\mu}(x_{1}, \ldots, x_{n})&  = &  \frac{\de \left[ x_{i}^{\mu_{j}+n-j}\right]}{\de \left[ x_{i}^{n-j}\right]},
\end{eqnarray}
where $\mu$ is an arbitrary partition $\mu_{1} \ge \mu_{2} \ge \cdots \mu_{n} \ge 0$ of length at most $n$.

An important variant of the Schur polynomial are the {\em shifted Schur polynomials}  that was developed by Okounkov and Olshanski \cite{OO}:
\begin{eqnarray}
s^{*}_{\mu}(x_{1}, \ldots, x_{n})& = & \frac{\de \left[ (x_{i}+n-i  \fall \mu_{j}+n-j ) \right]}{\de \left[ x_{i}+n-i \fall n-j \right]},
\end{eqnarray}
where the symbol $(x \fall k)$ is the $k$-th {\em falling factorial power} of a variable $x$:
\begin{eqnarray}
(x \fall k) & = & \left\{ \begin{array}{ll}
x(x-1)\cdots (x-k+1), &  \textnormal{if}~k=1,2, \ldots \\
1, & \textnormal{if}~k=0.
\end{array} \right.
\end{eqnarray}
Just like the ordinary Schur polynomials, the shifted Schur polynomials also satisfy the {\em stability property}:
\begin{eqnarray}
s^{*}_{\mu}(x_{1}, \ldots, x_{n},0) & = & s^{*}_{\mu}(x_{1}, \ldots, x_{n}). \label{stable}
\end{eqnarray}
The stability property allow us to define the {\em functions} $s^{*}_{\mu}(x_{1}, x_{2}, \ldots)$ in infinitely many variables that form a basis in the {\em algebra of shifted symmetric functions}, denoted by $\Lambda^{*}$. Every element of $\Lambda^{*}$ may be viewed as a function $f(x_{1}, x_{2}, \ldots)$ on an infinite sequence of arguments such that $x_{m} = 0$ for all sufficiently large $m$. We refer the reader to \cite{OO} for basic results on shifted symmetric functions.

For the application we have in mind, the following formula for the dimension of skew Young diagrams will be useful.

\begin{thm}[\cite{OO}] \label{dimension}
Let $\mu \vdash k$ and $\lambda \vdash n$ be two partitions, where $k \le n$ and $\mu \subseteq \lambda$. Let $\dimension \lambda/\mu$ denote the number of standard tableaux of shape $\lambda/\mu$; in particular, $\dimension \lambda = \dimension \lambda/\emptyset$. Then
\begin{eqnarray}
\frac{\dimension \lambda/\mu}{\dimension \lambda} & = & \frac{s^{*}_{\mu}(\lambda) }{(n \fall k)}, \label{dim}
\end{eqnarray}
where $s^{*}_{\mu}(\lambda) = s^{*}_{\mu}(\lambda_{1}, \lambda_{2}, \ldots)$.
\end{thm}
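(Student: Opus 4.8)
The plan is to obtain \eqref{dim} by writing both sides over a common determinantal denominator, matching the classical Aitken (exponential–Jacobi–Trudi) formula for the number of skew standard tableaux against the determinant that defines $s^{*}_{\mu}$. Write $n=|\lambda|$, $k=|\mu|$, fix an integer $\ell$ at least as large as the number of rows of $\lambda$, pad $\lambda$ and $\mu$ with zero parts to length $\ell$, and adopt the convention $1/m!:=0$ for $m<0$.

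First I would invoke Aitken's formula: applying the exponential specialization $h_{d}\mapsto 1/d!$ to the skew Jacobi–Trudi identity $s_{\lambda/\mu}=\det\bigl(h_{\lambda_{i}-\mu_{j}-i+j}\bigr)_{i,j=1}^{\ell}$ yields
\[
\dimension(\lambda/\mu)=(n-k)!\;\det\left[\frac{1}{(\lambda_{i}-\mu_{j}-i+j)!}\right]_{i,j=1}^{\ell},
\]
and in particular, taking $\mu=\emptyset$, $\dimension\lambda=n!\,\det\bigl[\,1/(\lambda_{i}-i+j)!\,\bigr]_{i,j=1}^{\ell}$. Next I would write $s^{*}_{\mu}(\lambda)=s^{*}_{\mu}(\lambda_{1},\dots,\lambda_{\ell})$ (legitimate by the stability property \eqref{stable}) in the $\ell$-variable determinantal form from the definition, and use the elementary identity
\[
\bigl(\lambda_{i}+\ell-i \fall \mu_{j}+\ell-j\bigr)=\frac{(\lambda_{i}-i+\ell)!}{(\lambda_{i}-\mu_{j}-i+j)!},
\]
which holds in every case (both sides vanish when $\lambda_{i}-\mu_{j}-i+j<0$, and both equal $1$ when the falling factorial is $(\cdot\fall 0)$). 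This lets one factor $(\lambda_{i}-i+\ell)!$ out of the $i$-th row of both the numerator determinant and the denominator determinant $\det[(\lambda_{i}+\ell-i\fall \ell-j)]$; the row factors cancel and leave
\[
s^{*}_{\mu}(\lambda)=\frac{\det\bigl[\,1/(\lambda_{i}-\mu_{j}-i+j)!\,\bigr]_{i,j=1}^{\ell}}{\det\bigl[\,1/(\lambda_{i}-i+j)!\,\bigr]_{i,j=1}^{\ell}}.
\]
Dividing the two Aitken expressions and comparing with the last display gives
\[
\frac{\dimension(\lambda/\mu)}{\dimension\lambda}=\frac{(n-k)!}{n!}\,s^{*}_{\mu}(\lambda)=\frac{s^{*}_{\mu}(\lambda)}{(n\fall k)},
\]
since $(n\fall k)=n!/(n-k)!$, which is exactly \eqref{dim}.

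So the proof is essentially bookkeeping; the only step needing genuine care is keeping the convention $1/m!=0$ consistent through the falling-factorial/factorial identity and the row factoring when some $\lambda_{i}-\mu_{j}-i+j$ is negative, and making sure the $\ell$-variable determinantal formula for $s^{*}_{\mu}$ is applicable (here $\mu\subseteq\lambda$ is convenient, though the determinant identities hold without it). A conceptually different route would be to quote Okounkov–Olshanski's characterization of $s^{*}_{\mu}$ as the unique shifted symmetric function of degree $\le k$ that vanishes at every $\lambda$ with $|\lambda|\le k$, $\lambda\ne\mu$, and has top homogeneous part equal to the ordinary Schur function $s_{\mu}$, and then to check that $\lambda\mapsto(|\lambda|\fall k)\,\dimension(\lambda/\mu)/\dimension\lambda$ enjoys exactly these properties; but verifying that this function is shifted symmetric — indeed, polynomial in the $\lambda_{i}$ at all — amounts to the determinant computation above, so the Aitken route is the self-contained one I would actually write out.
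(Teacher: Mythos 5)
Your argument is correct, but note that the paper itself offers no proof of this statement: Theorem \ref{dimension} is quoted verbatim from Okounkov--Olshanski \cite{OO} and used as a black box, so there is no in-paper proof to compare against. Your Aitken-determinant route is a legitimate, self-contained derivation: the identity $(\lambda_{i}+\ell-i \downarrow \mu_{j}+\ell-j)=(\lambda_{i}-i+\ell)!/(\lambda_{i}-\mu_{j}-i+j)!$ does hold in all cases under the convention $1/m!=0$ for $m<0$ (when $\lambda_{i}-\mu_{j}-i+j<0$ the falling factorial runs through the factor $0$), the row factors $(\lambda_{i}-i+\ell)!$ cancel between the numerator and the denominator determinant (which is a nonzero Vandermonde-type product since the $\lambda_{i}-i$ are distinct), and dividing the two Aitken formulas gives exactly \eqref{dim}. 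This is a somewhat more computational path than the one in \cite{OO}, which deduces the formula from the characterization of $s^{*}_{\mu}$ by its vanishing properties (essentially your sketched alternative); what your version buys is that it needs only the determinantal definition, the stability property \eqref{stable}, and the classical skew hook-content/Aitken formula, at the cost of importing that formula as an external ingredient. The only points worth making explicit if you wrote this up are (i) that $\ell$ is chosen at least the length of $\lambda$, hence of $\mu$, so the $\ell$-variable determinantal definition applies to both, and (ii) that the denominator determinant is nonzero at $x=\lambda$, so the cancellation of row factors is a genuine identity of numbers and not a formal manipulation.
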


\begin{thm}[\cite{OO} Vanishing Theorem]\label{vanish}
We have
\begin{eqnarray}
s^{*}_{\mu}(\lambda) & = & 0~~\textnormal{unless}~~\mu \subseteq \lambda,  \\
s^{*}_{\mu}(\mu) & = & H(\mu),
\end{eqnarray}
where $H(\mu) = \prod_{\alpha \in \mu} h(\alpha)$ is the product of the hook lengths of all boxes of $\mu$.
\end{thm}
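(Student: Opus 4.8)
The plan is to establish the two assertions separately, working directly from the determinantal definition of $s^{*}_{\mu}$. By the stability property (\ref{stable}), I may fix a single integer $N$ larger than the number of parts of both $\mu$ and $\lambda$, pad both partitions with zeros to length $N$, and evaluate the $N \times N$ determinants; the resulting value is independent of the choice of $N$. Write $a_{i} = \lambda_{i}+N-i$ and $b_{j} = \mu_{j}+N-j$ for $1 \le i,j \le N$. Since $\lambda$ and $\mu$ are partitions, both $(a_{i})$ and $(b_{j})$ are strictly decreasing sequences of nonnegative integers. Two elementary facts will drive everything. First, $(a \fall k) = a(a-1)\cdots(a-k+1)$ vanishes for a nonnegative integer $a$ precisely when $a < k$, because one of its factors is then $0$. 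Second, the denominator $\det[(a_{i} \fall N-j)]$ is a Vandermonde determinant: each entry is a monic polynomial of degree $N-j$ in $a_{i}$, so column operations reduce the matrix to $[a_{i}^{N-j}]$, giving $\det[(a_{i} \fall N-j)] = \prod_{i<j}(a_{i}-a_{j}) \neq 0$.

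For the vanishing statement, suppose $\mu \not\subseteq \lambda$ and pick an index $k$ with $\mu_{k} > \lambda_{k}$. I claim the block of the numerator matrix $[(a_{i} \fall b_{j})]$ sitting in rows $k, k+1, \ldots, N$ and columns $1, 2, \ldots, k$ is identically zero. Indeed, for $i \ge k$ we have $a_{i} \le a_{k} = \lambda_{k}+N-k$, while for $j \le k$ we have $b_{j} \ge b_{k} = \mu_{k}+N-k$; since $\lambda_{k} < \mu_{k}$, this yields $a_{i} < b_{j}$ and hence $(a_{i} \fall b_{j}) = 0$ on the whole block. This is a zero submatrix with $(N-k+1)+k = N+1 > N$, so in every permutation expansion of the determinant some row in $\{k, \ldots, N\}$ must be paired with a column in $\{1, \ldots, k\}$; thus every term vanishes and the numerator is $0$. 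Therefore $s^{*}_{\mu}(\lambda) = 0$.

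For the normalization $s^{*}_{\mu}(\mu) = H(\mu)$, set $\lambda = \mu$, so that $a_{i} = b_{i}$ for all $i$. The numerator is now $\det[(b_{i} \fall b_{j})]$. Below the diagonal $i > j$ forces $b_{i} < b_{j}$, hence $(b_{i} \fall b_{j}) = 0$, so the matrix is upper triangular with diagonal entries $(b_{i} \fall b_{i}) = b_{i}!$; its determinant is $\prod_{i} b_{i}!$. Combining with the Vandermonde denominator gives
\[ s^{*}_{\mu}(\mu) = \frac{\prod_{i=1}^{N} b_{i}!}{\prod_{i<j}(b_{i}-b_{j})}. \]
It remains to identify the right-hand side with $H(\mu)$. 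This is precisely the classical equivalence between Frobenius's determinantal formula for the number $f^{\mu}$ of standard Young tableaux of shape $\mu$ and the hook length formula $f^{\mu} = |\mu|!/H(\mu)$: Frobenius gives $f^{\mu} = |\mu|! \, \prod_{i<j}(b_{i}-b_{j}) / \prod_{i} b_{i}!$ with $b_{i} = \mu_{i}+N-i$, so dividing the two expressions for $f^{\mu}$ yields $H(\mu) = \prod_{i} b_{i}! / \prod_{i<j}(b_{i}-b_{j})$, as needed. Alternatively, and more in the spirit of this section, the same conclusion drops out of Theorem \ref{dimension} by taking $\lambda = \mu$ there: then $\dimension \mu/\mu = 1$, the factor $(n \fall k)$ becomes $(|\mu| \fall |\mu|) = |\mu|!$, and the hook length formula for $\dimension \mu$ gives $s^{*}_{\mu}(\mu) = |\mu|! / \dimension \mu = H(\mu)$.

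The two routine verifications above — that the entry $(a \fall k)$ vanishes exactly for $a < k$ and that the denominator is Vandermonde — carry the argument almost by themselves, so the only genuinely substantive point is the final identification $\prod_{i} b_{i}! / \prod_{i<j}(b_{i}-b_{j}) = H(\mu)$. I expect this to be the main obstacle if one insists on a self-contained treatment, since it requires the Frobenius-to-hook-length conversion rather than a one-line computation; invoking Theorem \ref{dimension} sidesteps it entirely. A secondary point demanding care is the bookkeeping around the number of variables $N$: one must check via the stability property (\ref{stable}) that all evaluations and the block-size count $N+1 > N$ are insensitive to padding $\mu$ and $\lambda$ with trailing zeros, and that $N$ can always be chosen large enough to accommodate the index $k$ witnessing $\mu \not\subseteq \lambda$.
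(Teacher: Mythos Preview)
The paper does not prove this theorem; it is quoted from Okounkov--Olshanski \cite{OO} without argument, so there is no in-paper proof to compare against. Your proof is correct and is essentially the classical one: the vanishing part follows from the rectangular zero block in the numerator matrix forced by $\mu_{k}>\lambda_{k}$, and the normalization reduces to an upper-triangular determinant $\prod_{i} b_{i}!$ over the Vandermonde denominator, which is then identified with $H(\mu)$ via the Frobenius form of the hook length formula.

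One caution on your alternative shortcut to $s^{*}_{\mu}(\mu)=H(\mu)$ via Theorem~\ref{dimension}: in \cite{OO} the dimension formula (their Theorem~8.1) is derived \emph{after} and using the Vanishing Theorem, so invoking Theorem~\ref{dimension} here is circular unless you supply an independent proof of it. Your primary route through the Frobenius determinantal formula for $f^{\mu}$ avoids this entirely and is the right choice; I would drop the alternative or at least flag the dependency.
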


As an example of shifted symmetric functions, set $h^{*}_{k} = s^{*}_{(k)}$ where $(k)$ is the partition of $k$ whose Young diagram consists of just one row. These are called the {\em complete shifted symmetric functions}. They are shifted analogues of the complete homogeneous symmetric functions. We shall require the following properties of $h^{*}_{k}$.

\begin{prop}[\cite{OO}]\label{complete-shifted}
The complete shifted symmetric functions $h^{*}_{k}$ can be written as
\begin{eqnarray}
h^{*}_{k}(x_{1}, x_{2}, \ldots) & = & \sum_{1 \le i_{1} \le \cdots \le i_{k}< \infty} (x_{i_{1}}-k+1)(x_{i_{2}}-k+2) \cdots x_{i_{k}}. \label{complete}
\end{eqnarray}
\end{prop}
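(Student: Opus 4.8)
The plan is to recognize the right-hand side of \eqref{complete}, call it $R_{k}$, as $h^{*}_{k}=s^{*}_{(k)}$ by invoking the Vanishing Theorem. Since $\{s^{*}_{\mu}\}$ is a basis of $\Lambda^{*}$ and, by Theorem \ref{vanish}, the matrix $\big(s^{*}_{\mu}(\lambda)\big)_{|\mu|,\,|\lambda|\le k}$ is triangular with nonzero diagonal entries $s^{*}_{\mu}(\mu)=H(\mu)$, any element of $\Lambda^{*}$ of degree at most $k$ is completely determined by its values on the partitions of size at most $k$. Because Theorem \ref{vanish} also gives $s^{*}_{(k)}(\lambda)=\delta_{\lambda,(k)}H((k))=\delta_{\lambda,(k)}\,k!$ for all $|\lambda|\le k$, it suffices to prove: (i) $R_{k}$ is a well-defined element of $\Lambda^{*}$ of degree at most $k$; and (ii) $R_{k}(\lambda)=\delta_{\lambda,(k)}\,k!$ for every partition $\lambda$ with $|\lambda|\le k$. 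The main obstacle will be the shifted symmetry of $R_{k}$ inside part (i).

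For (i): on a partition $\lambda$ the sum defining $R_{k}$ is finite, since the last factor of every term is $x_{i_{k}}$, which is $0$ as soon as $i_{k}>\ell(\lambda)$; and each term is a product of $k$ linear forms, so $\deg R_{k}\le k$. To see that $R_{k}$ is shifted symmetric I would check invariance under the move $x_{r}\mapsto x_{r+1}-1$, $x_{r+1}\mapsto x_{r}+1$ (other variables fixed), which transposes the shifted coordinates $x_{r}-r$ and $x_{r+1}-(r+1)$. Grouping the terms of $R_{k}$ by the number $c$ of indices $i_{t}$ that are $<r$ and the number $d$ of indices that are $\le r+1$, the factors contributed by the indices outside $\{r,r+1\}$ depend only on $c$ and $d$ and are untouched by the move, so everything reduces to showing, for all $N\ge 0$ and all $P,Q$, that
\[
S_{N}(P,Q):=\sum_{a+b=N,\ a,b\ge 0}(P+a\fall a)\,(Q+N\fall N-a)
\]
is unchanged under $(P,Q)\mapsto(Q-1,P+1)$, where $P=x_{r}-k+c$ and $Q=x_{r+1}-k+c$ (the move sends $(P,Q)$ to $(Q-1,P+1)$). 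This identity is an elementary falling-factorial summation of Chu--Vandermonde/hockey-stick type, and it is the only genuinely computational step in the argument.

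For (ii): expand $R_{k}(\lambda)$ and group its terms by the set of distinct indices used, say $v_{1}>v_{2}>\cdots>v_{J}$ with multiplicities $m_{1},\ldots,m_{J}$ summing to $k$, and put $s_{j}=m_{1}+\cdots+m_{j}$. In a given term the factors attached to the index $v_{j}$ form the consecutive block $\lambda_{v_{j}}-s_{j-1},\ \lambda_{v_{j}}-s_{j-1}-1,\ \ldots,\ \lambda_{v_{j}}-s_{j}+1$, so the term survives only if, for every $j$, either $\lambda_{v_{j}}<s_{j-1}$ or $\lambda_{v_{j}}\ge s_{j}$. With $s_{0}=0$ this forces $\lambda_{v_{1}}\ge s_{1}$; and since $v_{1}>v_{2}>\cdots$ while $\lambda$ is weakly decreasing, one has $\lambda_{v_{1}}\le\lambda_{v_{2}}\le\cdots$, so induction on $j$ yields $\lambda_{v_{j}}\ge s_{j}$ for all $j$, whence $|\lambda|\ge\lambda_{v_{J}}\ge s_{J}=k$. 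Hence $R_{k}(\lambda)=0$ when $|\lambda|<k$; and when $|\lambda|=k$ the same chain of inequalities leaves only the term using the single index $1$ (nonzero only if $\lambda_{1}\ge k$, i.e. $\lambda=(k)$), which contributes $\prod_{t=1}^{k}(\lambda_{1}-(k-t))=\prod_{t=1}^{k}t=k!$.

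A second, more computational route would start from the determinant in the definition of $s^{*}_{(k)}$: by the stability property \eqref{stable} it is enough to work with $n$ variables, and a cofactor expansion of $\det[(x_{i}+n-i\fall\mu_{j}+n-j)]$ along its first column (each resulting minor turning into a Vandermonde after column operations on falling factorials) yields
\[
s^{*}_{(k)}(x_{1},\ldots,x_{n})=\sum_{i=1}^{n}\frac{(y_{i}\fall k+n-1)}{\prod_{j\ne i}(y_{i}-y_{j})},\qquad y_{i}=x_{i}+n-i;
\]
identifying this Lagrange-interpolation sum with the sum over weakly increasing tuples is then the real work and parallels part (ii). In any event, \eqref{complete} is exactly the one-row case of the general combinatorial (reverse-tableau) formula for $s^{*}_{\mu}$ from \cite{OO}, and quoting that formula gives the Proposition at once.
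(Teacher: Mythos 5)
Your argument is correct, but note that the paper offers no proof of this Proposition at all: it is stated as a quoted result of Okounkov and Olshanski \cite{OO} (the one-row case of their combinatorial formula for $s^{*}_{\mu}$), so your closing remark --- that citing that formula gives the statement at once --- is in effect the paper's entire ``proof''. Your first route is a legitimate, self-contained derivation from the other two quoted ingredients (the Vanishing Theorem and the basis property of $\{s^{*}_{\mu}\}$): the claim that a degree-$\le k$ element of $\Lambda^{*}$ is determined by its values on partitions of size $\le k$ is the standard triangularity argument, and your evaluation in (ii) is exactly right --- nonvanishing of a term forces $\lambda_{v_{j}}\ge s_{j}$ for every $j$, hence $|\lambda|\ge\lambda_{v_{J}}\ge s_{J}=k$, with equality only possible for $\lambda=(k)$, where the unique surviving term is $1\cdot2\cdots k=k!=H((k))$. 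The one step you assert without proof, the invariance of $S_{N}(P,Q)=\sum_{a+b=N}(P+a\fall a)(Q+N\fall N-a)$ under $(P,Q)\mapsto(Q-1,P+1)$, does hold and is as elementary as you claim: peeling off the factor $Q+N$ gives $S_{N}(P,Q)=(P+N\fall N)+(Q+N)S_{N-1}(P,Q)$, so by induction the claim reduces to $(P+N\fall N)-(Q+N-1\fall N)=(P-Q+1)S_{N-1}(P,Q)$, and writing $P-Q+1=(P+a+1)-(Q+a)$ in the $a$-th summand makes the right-hand side telescope to exactly the left-hand side. What your approach buys is independence from the unproved tableau formula of \cite{OO}, at the cost of the shifted-symmetry verification; what the paper's citation buys is brevity and the general-$\mu$ statement, neither of which is needed for the application.
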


\begin{cor}
The complete shifted symmetric functions $h^{*}_{k}$ satisfy the following recurrence:
\begin{eqnarray}
h^{*}_{k}(x_{1}, \ldots, x_{n}) & = & x_{n} h^{*}_{k-1}(x_{1}-1, \ldots, x_{n}-1) + h^{*}_{k}(x_{1}, \ldots, x_{n-1}). \label{recurrence}
\end{eqnarray}
\end{cor}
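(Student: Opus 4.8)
The plan is to obtain the recurrence by splitting the explicit expansion of Proposition~\ref{complete-shifted} according to the largest summation index. Write $h^{*}_{k}(x_{1},\ldots,x_{n})$ for the restriction of (\ref{complete}) to the variables $x_{1},\ldots,x_{n}$ — equivalently, the value of the shifted symmetric function at $(x_{1},\ldots,x_{n},0,0,\ldots)$, where by the stability property every term of (\ref{complete}) whose largest index exceeds $n$ has vanishing last factor $x_{i_{k}}$ and drops out. Thus
\[ h^{*}_{k}(x_{1},\ldots,x_{n}) = \sum_{1\le i_{1}\le\cdots\le i_{k}\le n} (x_{i_{1}}-k+1)(x_{i_{2}}-k+2)\cdots x_{i_{k}}, \]
and I would break this sum into the terms with $i_{k}\le n-1$ and the terms with $i_{k}=n$.

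For the terms with $i_{k}\le n-1$, the summand does not involve $x_{n}$ at all, and the admissible multi-indices $1\le i_{1}\le\cdots\le i_{k}\le n-1$ are exactly those appearing in the $(n-1)$-variable expansion of $h^{*}_{k}$. Hence this block contributes precisely $h^{*}_{k}(x_{1},\ldots,x_{n-1})$, the second term of (\ref{recurrence}).

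For the terms with $i_{k}=n$, the last factor is $x_{i_{k}}=x_{n}$, which I factor out. The residual sum runs over $1\le i_{1}\le\cdots\le i_{k-1}\le n$ and has $j$-th factor $x_{i_{j}}-k+j$ for $j=1,\ldots,k-1$ (so its final factor is $x_{i_{k-1}}-1$). The key observation is that this residual sum is exactly $h^{*}_{k-1}(x_{1}-1,\ldots,x_{n}-1)$: applying (\ref{complete}) with $k$ replaced by $k-1$ and each $x_{j}$ replaced by $x_{j}-1$, the $j$-th factor becomes $(x_{i_{j}}-1)-(k-1)+j = x_{i_{j}}-k+j$, which matches term by term. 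Adding the two blocks gives the right-hand side of (\ref{recurrence}).

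The computation is pure bookkeeping, so there is no genuine obstacle; the only point requiring care is the index shift — verifying that lowering every variable by $1$ exactly compensates for passing from the ``$-k$'' normalization of $h^{*}_{k}$ to the ``$-(k-1)$'' normalization of $h^{*}_{k-1}$. The boundary case $k=1$ (with the convention $h^{*}_{0}=1$) is immediate, since both sides then equal $x_{1}+\cdots+x_{n}$.
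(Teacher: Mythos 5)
Your proposal is correct and is essentially identical to the paper's proof: both reduce to the finite-variable expansion of Proposition~\ref{complete-shifted} via stability and split the sum according to whether $i_{k}=n$ or $i_{k}\le n-1$, with the same index-shift verification identifying the $i_{k}=n$ block with $x_{n}h^{*}_{k-1}(x_{1}-1,\ldots,x_{n}-1)$. No issues.
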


\begin{proof}
In view of the stability property and Proposition \ref{complete-shifted}, we have
\[ h^{*}_{k}(x_{1}, \ldots, x_{n}) = \sum_{1 \le i_{1} \le \cdots \le i_{k} \le n } (x_{i_{1}}-k+1)(x_{i_{2}}-k+2) \cdots x_{i_{k}}. \]
Therefore,
\begin{eqnarray}
h^{*}_{k}(x_{1}, \ldots, x_{n})  & = & x_{n} \left(\sum_{1 \le i_{1} \le \cdots \le i_{k-1} \le n } (x_{i_{1}}-k+1)(x_{i_{2}}-k+2) \cdots (x_{i_{k}}-1) \right) \nonumber \\
 & & + \sum_{1 \le i_{1} \le \cdots \le i_{k} \le n-1 } (x_{i_{1}}-k+1)(x_{i_{2}}-k+2) \cdots x_{i_{k}} \nonumber \\
& = &  x_{n} h^{*}_{k-1}(x_{1}-1, \ldots, x_{n}-1) + h^{*}_{k}(x_{1}, \ldots, x_{n-1}). \nonumber
\end{eqnarray}
\end{proof}

Recall the following formula due to Renteln \cite[Theorem 3.2]{Renteln}.

\begin{thm}[\cite{Renteln}]\label{Renteln-2}
The eigenvalues of the derangement graph $\Gamma_{n}$ are given by
\begin{eqnarray}
\eta_{\lambda} & = & \sum_{k=0}^{n} (-1)^{n-k}(n \fall k) \frac{\dimension \lambda/(k)}{\dimension \lambda}
\end{eqnarray}
\end{thm}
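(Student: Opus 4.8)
The plan is to compute the character sum $\sum_{\sigma\in\mathcal{D}_{n}}\chi_{\lambda}(\sigma)$ directly and to recognize its inclusion--exclusion expansion as a falling-factorial combination of skew dimensions. By Theorem \ref{cayley} applied with $G=\mathcal{S}_{n}$, $S=\mathcal{D}_{n}$ and $\chi=\chi_{\lambda}$ (so that $\chi(1)=\dimension\lambda$), the eigenvalue is
\[
\eta_{\lambda}=\frac{1}{\dimension\lambda}\sum_{\sigma\in\mathcal{D}_{n}}\chi_{\lambda}(\sigma).
\]
Thus it suffices to prove $\sum_{\sigma\in\mathcal{D}_{n}}\chi_{\lambda}(\sigma)=\sum_{k=0}^{n}(-1)^{n-k}(n\fall k)\,\dimension\lambda/(k)$, after which dividing by $\dimension\lambda$ yields the claim.

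First I would pass from derangements to arbitrary permutations by inclusion--exclusion on the fixed-point set. Using the identity $\mathbf{1}[\mathrm{Fix}(\sigma)=\emptyset]=\sum_{T\subseteq\mathrm{Fix}(\sigma)}(-1)^{|T|}$ and interchanging the order of summation gives
\[
\sum_{\sigma\in\mathcal{D}_{n}}\chi_{\lambda}(\sigma)=\sum_{T\subseteq[n]}(-1)^{|T|}\sum_{\substack{\sigma\in\mathcal{S}_{n}\\ T\subseteq\mathrm{Fix}(\sigma)}}\chi_{\lambda}(\sigma)=\sum_{k=0}^{n}(-1)^{k}\binom{n}{k}\,g_{k}(\lambda),
\]
where for a fixed $k$-subset $T$ the inner sum $g_{k}(\lambda)=\sum_{\tau}\chi_{\lambda}(\tau)$ ranges over the subgroup of permutations fixing every point of $T$; this subgroup is $\mathcal{S}([n]\setminus T)\cong\mathcal{S}_{n-k}$, and by conjugacy $g_{k}(\lambda)$ depends only on $k$, which accounts for the factor $\binom{n}{k}$.

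Next I would evaluate $g_{k}(\lambda)$ by combining the branching rule with character orthogonality. Restricting $\chi_{\lambda}$ from $\mathcal{S}_{n}$ down to $\mathcal{S}_{n-k}$ and iterating the usual branching rule gives $\chi_{\lambda}\!\downarrow_{\mathcal{S}_{n-k}}=\sum_{\mu\vdash n-k}(\dimension\lambda/\mu)\,\chi_{\mu}$, since the number of length-$k$ removal paths from $\lambda$ to $\mu$ is exactly the number of standard tableaux of skew shape $\lambda/\mu$. Summing over $\tau\in\mathcal{S}_{n-k}$ and using $\sum_{\tau\in\mathcal{S}_{m}}\chi_{\mu}(\tau)=m!\,\langle\chi_{\mu},\mathbf{1}\rangle$, which vanishes unless $\mu=(m)$ is the trivial one-row partition, isolates a single term:
\[
g_{k}(\lambda)=(n-k)!\,\dimension\lambda/(n-k).
\]
Substituting this back and writing $\binom{n}{k}(n-k)!=n!/k!$ gives $\sum_{\sigma\in\mathcal{D}_{n}}\chi_{\lambda}(\sigma)=\sum_{k=0}^{n}(-1)^{k}\tfrac{n!}{k!}\,\dimension\lambda/(n-k)$; reindexing by $j=n-k$ and using $n!/(n-j)!=(n\fall j)$ produces exactly the asserted sum, and dividing by $\dimension\lambda$ finishes the proof.

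The routine parts are the inclusion--exclusion bookkeeping and the sign/falling-factorial manipulation. The step I expect to carry the real content is the evaluation of $g_{k}(\lambda)$: one must correctly identify the restriction of $\chi_{\lambda}$ to a point-fixing $\mathcal{S}_{n-k}$ via the iterated branching rule, so that the multiplicity of the trivial component---the only one surviving after summing characters over the subgroup---is precisely the skew dimension $\dimension\lambda/(n-k)$. Care is needed to verify that $g_{k}$ is genuinely independent of the chosen $k$-subset and that the branching multiplicities coincide with the standard-tableau counts appearing in Theorem \ref{dimension}.
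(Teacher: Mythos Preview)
Your argument is correct. The inclusion--exclusion over fixed-point sets, the identification of the inner sum with the character sum over a Young subgroup $\mathcal{S}_{n-k}$, the use of the iterated branching rule to write $\chi_{\lambda}\!\downarrow_{\mathcal{S}_{n-k}}=\sum_{\mu}(\dimension\lambda/\mu)\chi_{\mu}$, and the orthogonality step singling out $\mu=(n-k)$ are all sound; the reindexing at the end is routine.

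As for comparison: the present paper does not prove Theorem~\ref{Renteln-2} at all. It is quoted verbatim from Renteln~\cite[Theorem~3.2]{Renteln} and used only as input to derive Corollary~\ref{shift}. So there is no ``paper's own proof'' to measure yours against here. For what it is worth, your approach is essentially the standard derivation (and matches in spirit what one finds in \cite{Renteln}): character sum over $\mathcal{D}_{n}$, inclusion--exclusion to unrestricted permutations, and branching to pick out the trivial constituent. The only remark I would add is that your caveat about $g_{k}$ being independent of the chosen $k$-subset is immediate, since any two point-stabilising copies of $\mathcal{S}_{n-k}$ inside $\mathcal{S}_{n}$ are conjugate and $\chi_{\lambda}$ is a class function; you noted this yourself, so no gap remains.
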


\noindent Therefore, it follows immediately from Theorem \ref{dimension} and Theorem \ref{Renteln-2} that

\begin{cor}\label{shift}
The eigenvalues of the derangement graph $\Gamma_{n}$ are given by
\begin{eqnarray}
\eta_{\lambda} & = & \sum_{k=0}^{n} (-1)^{n-k}s^{*}_{(k)}(\lambda) \nonumber \\
& = & \sum_{k=0}^{n} (-1)^{n-k}h^{*}_{k}(\lambda).
\end{eqnarray}
\end{cor}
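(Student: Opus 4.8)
The plan is to substitute the skew-shape dimension formula of Theorem~\ref{dimension} directly into Renteln's expression for the eigenvalues in Theorem~\ref{Renteln-2}. First I would specialize to the one-row partition $\mu = (k) \vdash k$ and apply Theorem~\ref{dimension} with this choice of $\mu$. For each $k$ with $(k) \subseteq \lambda$, equivalently $k \le \lambda_{1}$, this gives
\[ \frac{\dimension \lambda/(k)}{\dimension \lambda} = \frac{s^{*}_{(k)}(\lambda)}{(n \fall k)}. \]
Inserting this into Renteln's sum, the factor $(n \fall k)$ appearing in Theorem~\ref{Renteln-2} cancels against the denominator, leaving the summand $(-1)^{n-k} s^{*}_{(k)}(\lambda)$.

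Next I would handle the terms with $(k) \not\subseteq \lambda$, i.e.\ $k > \lambda_{1}$, where Theorem~\ref{dimension} is not directly applicable. For such $k$ the skew diagram $\lambda/(k)$ does not exist, so $\dimension \lambda/(k) = 0$ and the corresponding summand in Renteln's formula vanishes; on the shifted-Schur side, the Vanishing Theorem (Theorem~\ref{vanish}) gives $s^{*}_{(k)}(\lambda) = 0$ precisely because $(k) \not\subseteq \lambda$. Hence the two expressions agree term by term over the entire range $0 \le k \le n$, including the boundary value $k = 0$, where $(0) = \emptyset$, $s^{*}_{\emptyset}(\lambda) = 1$, and $\dimension \lambda/\emptyset = \dimension \lambda$.

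Finally, recalling the definition $h^{*}_{k} = s^{*}_{(k)}$ introduced just before Proposition~\ref{complete-shifted}, I would rewrite the result as $\sum_{k=0}^{n} (-1)^{n-k} h^{*}_{k}(\lambda)$ to obtain the second equality. I expect no genuine obstacle in this argument: its entire substance is the cancellation of $(n \fall k)$ together with the bookkeeping of the vanishing terms. The only step requiring slight care is invoking the Vanishing Theorem to certify that the identity of Theorem~\ref{dimension}, stated under the hypothesis $\mu \subseteq \lambda$, persists (with both sides equal to zero) for the out-of-range indices $k$, so that the summation may be taken over the full range $0 \le k \le n$ as written.
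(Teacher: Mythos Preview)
Your argument is correct and is exactly the one the paper intends: the corollary is stated as an immediate consequence of Theorem~\ref{dimension} and Theorem~\ref{Renteln-2}, and your write-up simply spells out the cancellation of $(n\fall k)$ and the bookkeeping for the terms with $(k)\not\subseteq\lambda$ via the Vanishing Theorem. There is nothing to add or correct.
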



\noindent {\bf Proof of Theorem \ref{formula}.}\\[0.1cm]

\noindent Set  $\eta_{\lambda}' = \sum_{k=0}^{n} (-1)^{k} h^{*}_{k}(\lambda)$. By the Vanishing Theorem (Theorem \ref{vanish}) and Corollary \ref{shift}, we can write
\[ \eta_{\lambda}' = \sum_{k=0}^{\infty} (-1)^{k} h^{*}_{k}(\lambda) \]
so that
\[ \eta_{\lambda}' = (-1)^{|\lambda|}\eta_{\lambda}. \]
By (\ref{recurrence}),
\begin{eqnarray}
\eta_{\lambda}' & = &  \sum_{k=0}^{\infty} \left((-1)^{k} \left(\lambda_{r} h^{*}_{k-1}(\lambda_{1}-1, \ldots, \lambda_{r}-1) + h^{*}_{k}(\lambda_{1}, \ldots, \lambda_{r-1})\right) \right) \nonumber \\
& = & -\lambda_{r} \sum_{k=0}^{\infty} (-1)^{k-1}h^{*}_{k-1}(\lambda_{1}-1, \ldots, \lambda_{r}-1) + \sum_{k=0}^{\infty} (-1)^{k}h^{*}_{k}(\lambda_{1}, \ldots, \lambda_{r-1}) \nonumber \\
& = & -\lambda_{r} \eta_{\lambda-\hat{c}}' + \eta'_{\lambda-\hat{l}} \nonumber \\
& = & -\lambda_{r} (-1)^{|\lambda-\hat{c}|} \eta_{\lambda-\hat{c}} + (-1)^{|\lambda-\hat{l}|}\eta_{\lambda-\hat{l}} \nonumber \\
(-1)^{|\lambda|} \eta_{\lambda} & = & \lambda_{r}(-1)^{1+|\lambda|-r} \eta_{\lambda-\hat{c}} + (-1)^{|\lambda|-\lambda_{r}} \eta_{\lambda-\hat{l}} \nonumber \\
\eta_{\lambda} & = & (-1)^{r-1}\lambda_{r} \eta_{\lambda-\hat{c}} + (-1)^{\lambda_{r}} \eta_{\lambda-\hat{l}}.\notag
\end{eqnarray}

\hfill $\square$

\section{A simpler proof of the Alternating Sign Property}

We prove by induction on $|\lambda|$. Obviously, the property holds for all small partitions. By the inductive hypothesis,
\begin{eqnarray*}
\sign\left( (-1)^{r-1}\eta_{\lambda-\hat{c}}\right)&  = &  (-1)^{r-1} (-1)^{|\lambda-\hat{c}|-(\lambda_{1}-1)}\\
& = & (-1)^{r-1+|\lambda|-r-\lambda_{1}+1} \\
& = & (-1)^{|\lambda|-\lambda_{1}}.
\end{eqnarray*}
Similarly,
\begin{eqnarray*}
\sign \left( (-1)^{\lambda_{r}} \eta_{\lambda-\hat{l}} \right) & = & (-1)^{\lambda_{r}} (-1)^{|\lambda-\hat{l}|-\lambda_{1}} \\
& = & (-1)^{\lambda_{r} + |\lambda|-\lambda_{r}-\lambda_{1}} \\
& = & (-1)^{|\lambda|-\lambda_{1}}.
\end{eqnarray*}
By the recurrence formula (\ref{main-recurrence}), we deduce that
\[ \sign(\eta_{\lambda}) = (-1)^{|\lambda|-\lambda_{1}}. \]

\hfill $\square$

\section{Some preliminary lemmas}

For convenience, let us write
\begin{equation}
f(\lambda_1,\lambda_2,\dots, \lambda_r)=\vert \eta_{(\lambda_1,\lambda_2,\dots, \lambda_r)} \vert.\notag
\end{equation}
Then by Theorem \ref{alternating-sign} and Theorem \ref{formula}, we have
\begin{equation}
f(\lambda_1,\lambda_2,\dots, \lambda_r)=\lambda_{r}f(\lambda_1-1,\lambda_2-1,\dots, \lambda_r-1) + f(\lambda_1,\lambda_2,\dots, \lambda_{r-1}).\label{use_recur}
\end{equation}

By abuse of notation, in this section we shall use the symbol $\lambda$ to denote a positive integer instead of a partition.

\begin{lm}\label{lm_h_one_variable}
 \begin{align}
 h^{*}_{0}(\lambda)&=1,\notag\\
 h^{*}_{1}(\lambda)&=\lambda,\notag\\
  h^{*}_{k}(\lambda)&=(\lambda-k+1)(\lambda-k+2)\cdots(\lambda-1)(\lambda),\ \ \textnormal{for $k\geq 2$}.\notag
\end{align}
\end{lm}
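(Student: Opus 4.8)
The plan is to read off these values directly from Proposition~\ref{complete-shifted}, which expresses $h^*_k$ as a sum over weakly increasing index sequences. Since we are evaluating at a single variable $\lambda$ (recall the abuse of notation: $\lambda$ is now a positive integer, equivalently the one-variable specialization $h^*_k(\lambda, 0, 0, \ldots)$ justified by the stability property~(\ref{stable})), the only admissible index sequence $1 \le i_1 \le \cdots \le i_k < \infty$ with all $x_{i_j}$ nonzero is the constant sequence $i_1 = \cdots = i_k = 1$. Every other sequence picks up a factor $x_{i_j} = 0$ for some $j \ge 2$.

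First I would handle $k = 0$: by the falling-factorial convention $(x \fall 0) = 1$ and the definition $h^*_0 = s^*_{(0)} = s^*_\emptyset$, or simply by noting the empty product in~(\ref{complete}), we get $h^*_0(\lambda) = 1$. Next, for $k = 1$, formula~(\ref{complete}) gives $h^*_1(\lambda) = \sum_{1 \le i_1 < \infty} x_{i_1} = x_1 = \lambda$ after specialization (all terms with $i_1 \ge 2$ vanish). Then for $k \ge 2$, the generic term in~(\ref{complete}) is $(x_{i_1} - k + 1)(x_{i_2} - k + 2) \cdots (x_{i_k} - 0)$; taking $i_1 = \cdots = i_k = 1$ and $x_1 = \lambda$, $x_j = 0$ for $j \ge 2$, the surviving term is $(\lambda - k + 1)(\lambda - k + 2) \cdots (\lambda - 1)(\lambda)$, and I would argue that any sequence with $i_j \ge 2$ for some $j$ contributes $0$ because the corresponding factor $x_{i_j} - (k - j)$ becomes $-(k-j)$, which is zero precisely when $j = k$; so one must be slightly careful and observe that if $i_k \ge 2$ the last factor $x_{i_k}$ vanishes, and more generally if $i_j$ is the \emph{largest} index that is $\ge 2$, then either $j = k$ (last factor is $0$) or $j < k$ but then $i_{j+1} = i_j \ge 2$ too, contradicting maximality unless $i_{j+1} = 1$, impossible since the sequence is nondecreasing. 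Thus only the constant sequence $(1, \ldots, 1)$ survives.

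Alternatively, and perhaps more cleanly, I would invoke the Corollary's recurrence~(\ref{recurrence}) with $n = 1$: $h^*_k(\lambda) = \lambda \, h^*_{k-1}(\lambda - 1) + h^*_k(\emptyset)$, and note $h^*_k(\emptyset) = h^*_k(\,) = 0$ for $k \ge 1$ (empty specialization, or the Vanishing Theorem since $(k) \not\subseteq \emptyset$), giving $h^*_k(\lambda) = \lambda \, h^*_{k-1}(\lambda - 1)$; unwinding this recursion from the base case $h^*_1(\lambda) = \lambda$ yields $h^*_k(\lambda) = \lambda(\lambda-1)\cdots(\lambda-k+1)$ by an immediate induction on $k$. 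This is essentially a restatement of the observation that $h^*_k(\lambda) = (\lambda \fall k)$, which is also visible from Theorem~\ref{dimension}: $\dimension \lambda / (k)$ with $\lambda = (\lambda)$ a single row counts standard tableaux of a skew shape that is again a single row of length $\lambda - k$, of which there is exactly one, so $s^*_{(k)}((\lambda)) = (\lambda \fall k)$.

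There is essentially no obstacle here; the statement is a direct specialization of results already in hand. The only point requiring a moment's care is the vanishing of all non-constant index sequences in~(\ref{complete}) for $k \ge 2$, or equivalently the vanishing $h^*_k(\emptyset) = 0$ for $k \ge 1$ used in the recurrence approach — both are immediate but should be stated explicitly rather than left to the reader. I would present the recurrence-based derivation as the main argument for its brevity, with the direct reading of~(\ref{complete}) as a remark.
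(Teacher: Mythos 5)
Your proposal is correct and follows the same route as the paper, whose entire proof of this lemma is the single sentence ``It follows easily from Proposition \ref{complete-shifted}''; your direct reading of formula (\ref{complete}) in one variable (only the constant index sequence survives, since the sequence is nondecreasing and $i_k\ge 2$ forces the last factor $x_{i_k}=0$) is precisely the omitted verification. The recurrence-based alternative is a fine bonus but not needed.
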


\begin{proof} It follows easily from Proposition \ref{complete-shifted}.
\end{proof}

\begin{lm}\label{recurrence_general} For any $1< m\leq r$,
\begin{equation}
f(\lambda_1,\lambda_2,\dots, \lambda_r)=\sum_{k=0}^{\lambda_m} h^{*}_{k}(\lambda_m,\dots, \lambda_r) f(\lambda_1-k,\lambda_2-k,\dots, \lambda_{m-1}-k).\notag
\end{equation}
\end{lm}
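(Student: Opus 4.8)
The plan is to prove the identity by downward induction on $m$, starting from the base case $m=r$ and descending to $m=2$. Two preliminary remarks make the bookkeeping painless. First, by Corollary~\ref{shift} and the stability property~(\ref{stable}), the value $\eta_\mu$ --- and hence $f$ --- is unchanged if we append or delete parts equal to $0$; we use this silently below. Second, since the hypothesis is $1<m$, every partition that arises in the argument has at least two parts and size greater than $1$, so the recurrence~(\ref{use_recur}) may be applied to it without concern. For the base case $m=r$, we must show
\[
f(\lambda_1,\dots,\lambda_r)=\sum_{k=0}^{\lambda_r} h^{*}_{k}(\lambda_r)\, f(\lambda_1-k,\dots,\lambda_{r-1}-k).
\]
I would obtain this by iterating~(\ref{use_recur}) on its own first term: an easy induction on $j$ gives
\[
f(\lambda_1,\dots,\lambda_r)=(\lambda_r\fall j)\, f(\lambda_1-j,\dots,\lambda_r-j)+\sum_{k=0}^{j-1}(\lambda_r\fall k)\, f(\lambda_1-k,\dots,\lambda_{r-1}-k)
\]
for $0\le j\le\lambda_r$, and at $j=\lambda_r$ the leading partition has last part $0$, which we drop; since $(\lambda_r\fall k)=h^{*}_{k}(\lambda_r)$ by Lemma~\ref{lm_h_one_variable}, this is exactly the base case.

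The inductive step hinges on an addition formula for the complete shifted symmetric functions that splits off the \emph{first} variable:
\[
h^{*}_{t}(\lambda_m,\lambda_{m+1},\dots,\lambda_r)=\sum_{k=0}^{t} h^{*}_{t-k}(\lambda_m-k)\, h^{*}_{k}(\lambda_{m+1},\dots,\lambda_r).
\]
Since the recurrence~(\ref{recurrence}) only peels off the \emph{last} variable and $h^{*}_{t}$ is shifted-symmetric rather than genuinely symmetric, I would prove this directly from Proposition~\ref{complete-shifted}: grouping the monomials $1\le i_1\le\cdots\le i_t\le r-m+1$ of~(\ref{complete}) according to the number $p$ of indices equal to $1$, each group contributes $(\lambda_m-t+1)(\lambda_m-t+2)\cdots(\lambda_m-t+p)$ times a sum over indices $\ge 2$; after reindexing, the latter sum is precisely $h^{*}_{t-p}(\lambda_{m+1},\dots,\lambda_r)$, while the former product equals $h^{*}_{p}(\lambda_m-t+p)$ by Lemma~\ref{lm_h_one_variable}. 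Writing $k=t-p$ (so $\lambda_m-t+p=\lambda_m-k$) yields the displayed formula. Moreover, by the Vanishing Theorem (Theorem~\ref{vanish}) the term indexed by $k$ vanishes as soon as $k>\lambda_{m+1}$, so the sum may be restricted to $0\le k\le\min(t,\lambda_{m+1})$.

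With these two ingredients the induction closes quickly. Assuming the lemma for $m+1$, I would start from
\[
f(\lambda_1,\dots,\lambda_r)=\sum_{k=0}^{\lambda_{m+1}} h^{*}_{k}(\lambda_{m+1},\dots,\lambda_r)\, f(\lambda_1-k,\dots,\lambda_m-k),
\]
apply the already-established case $m=r$ to the $m$-part partition $(\lambda_1-k,\dots,\lambda_m-k)$ inside each summand, and then collect the resulting double sum by the new index $t=k+j$. The coefficient of $f(\lambda_1-t,\dots,\lambda_{m-1}-t)$ is then $\sum_{k}h^{*}_{k}(\lambda_{m+1},\dots,\lambda_r)\,h^{*}_{t-k}(\lambda_m-k)$, which the addition formula identifies with $h^{*}_{t}(\lambda_m,\dots,\lambda_r)$; this is the desired expression for $m$. (If some $\lambda_m-k$ equals $0$, the case $m=r$ degenerates to a tautology via the zero-part convention, so no separate case is needed.) I expect the only real obstacle to be the addition formula itself: one has to resist the temptation to appeal to symmetry and instead verify by hand that the ``first-variable part'' $(\lambda_m-t+1)\cdots(\lambda_m-t+p)$ of a monomial in~(\ref{complete}) reorganizes exactly into $h^{*}_{t-k}(\lambda_m-k)$; once that is checked, the rest is routine index manipulation.
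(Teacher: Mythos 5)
Your proposal is correct and follows essentially the same route as the paper: the base case $m=r$ by iterating the recurrence (\ref{use_recur}) on its first term, then a downward induction in which the inner $f$'s are expanded one more step and the double sum is collected along $t=k+j$ using the convolution identity $h^{*}_{t}(\lambda_m,\dots,\lambda_r)=\sum_{k}h^{*}_{t-k}(\lambda_m-k)\,h^{*}_{k}(\lambda_{m+1},\dots,\lambda_r)$, which is exactly the identity the paper invokes from Proposition \ref{complete-shifted}. The only difference is cosmetic: you spell out the proof of that addition formula (and the vanishing of terms with $k>\lambda_{m+1}$), which the paper leaves implicit.
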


\begin{proof} Repeatedly applying equation (\ref{use_recur}) and by Lemma \ref{lm_h_one_variable}, we obtain
\begin{align}
&f(\lambda_1,\lambda_2,\dots, \lambda_r)\notag\\
&=h^{*}_{1}(\lambda_r)f(\lambda_1-1,\lambda_2-1,\dots, \lambda_{r}-1) + h^{*}_{0}(\lambda_r)f(\lambda_1,\lambda_2,\dots, \lambda_{r-1})\notag\\
&=(\lambda_{r})(\lambda_r-1)f(\lambda_1-2,\lambda_2-2,\dots, \lambda_r-2)+\sum_{k=0}^{1} h^{*}_{k}(\lambda_r) f(\lambda_1-k,\lambda_2-k,\dots, \lambda_{r-1}-k)\notag\\
&=(\lambda_{r})(\lambda_r-1)(\lambda_r-2)f(\lambda_1-3,\lambda_2-3,\dots, \lambda_r-3)+\sum_{k=0}^{2} h^{*}_{k}(\lambda_r) f(\lambda_1-k,\lambda_2-k,\dots, \lambda_{r-1}-k)\notag\\
&\hskip 3cm\vdots\notag\\
&=\sum_{k=0}^{\lambda_r} h^{*}_{k}(\lambda_r) f(\lambda_1-k,\lambda_2-k,\dots, \lambda_{r-1}-k).\label{eq_use}
\end{align}
Thus the lemma holds for $m=r$. Assume that it holds for some $m_0$, $2<m_0\leq r$. We shall show that it also holds for $m_0-1$.

By assumption, the following equation holds:
\begin{equation}
f(\lambda_1,\lambda_2,\dots, \lambda_r)=\sum_{k=0}^{\lambda_{m_0}} h^{*}_{k}(\lambda_{m_0},\dots, \lambda_r) f(\lambda_1-k,\lambda_2-k,\dots, \lambda_{m_0-1}-k).\label{eq_use2}
\end{equation}
By applying equation (\ref{eq_use}),
\begin{align}
&f(\lambda_1,\lambda_2,\dots, \lambda_r)\notag\\
&=\sum_{k=0}^{\lambda_{m_0}} h^{*}_{k}(\lambda_{m_0},\dots, \lambda_r)
\left (\sum_{j=0}^{\lambda_{m_0-1}-k} h^{*}_{j}(\lambda_{m_0-1}-k) f(\lambda_1-k-j,\lambda_2-k-j,\dots, \lambda_{m_0-2}-k-j) \right)\notag\\
&=\sum_{k=0}^{\lambda_{m_0}} \sum_{j=0}^{\lambda_{m_0-1}-k} h^{*}_{k}(\lambda_{m_0},\dots, \lambda_r)
 h^{*}_{j}(\lambda_{m_0-1}-k) f(\lambda_1-k-j,\lambda_2-k-j,\dots, \lambda_{m_0-2}-k-j).\label{eq_use3}
\end{align}
Now by collecting all the terms with $k+j=j_0$, equation (\ref{eq_use3}) becomes
\begin{align}
&f(\lambda_1,\lambda_2,\dots, \lambda_r)\notag\\
&=\sum_{j_0=0}^{\lambda_{m_0-1}} \left( \sum_{\substack{k+j=j_0,\\ 0\leq k\leq \lambda_{m_0}}} h^{*}_{j}(\lambda_{m_0-1}-k)h^{*}_{k}(\lambda_{m_0},\dots, \lambda_r)
  \right) f(\lambda_1-j_0,\lambda_2-j_0,\dots, \lambda_{m_0-2}-j_0).\label{eq_use4}
\end{align}
By Proposition \ref{complete-shifted},
\begin{equation}
 h^{*}_{j_0}(\lambda_{m_0-1},\lambda_{m_0},\dots, \lambda_r)=\sum_{\substack{k+j=j_0,\\ 0\leq k\leq \lambda_{m_0}}} h^{*}_{j}(\lambda_{m_0-1}-k)h^{*}_{k}(\lambda_{m_0},\dots, \lambda_r).\notag
\end{equation}
Thus, by induction the lemma follows.
\end{proof}

\begin{lm}\label{inequality_h0}
 \begin{align}
 h^{*}_{0}(\lambda_1,\lambda_2,\dots,\lambda_r)&=1,\notag\\
 h^{*}_{1}(\lambda_1,\lambda_2,\dots,\lambda_r)&=\lambda_1+\lambda_2+\cdots+\lambda_r.\notag
\end{align}
\end{lm}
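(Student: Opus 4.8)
The plan is to read both identities directly off the explicit formula (\ref{complete}) for the complete shifted symmetric functions in Proposition \ref{complete-shifted}, combined with the stability property (\ref{stable}), which is what permits us to evaluate these a priori infinite sums at the finite list of arguments $\lambda_1, \ldots, \lambda_r$. For $k = 0$ the summand in (\ref{complete}) is an empty product (equivalently $h^{*}_{0} = s^{*}_{\emptyset}$), so $h^{*}_{0}(\lambda_1, \ldots, \lambda_r) = 1$; this is consistent with the already-recorded initial value $\eta_{\emptyset} = 1$ and needs nothing further. For $k = 1$, (\ref{complete}) collapses to the single-factor sum $h^{*}_{1}(x_1, x_2, \ldots) = \sum_{1 \le i_1 < \infty} x_{i_1}$, and truncating at $r$ variables via stability yields $h^{*}_{1}(\lambda_1, \ldots, \lambda_r) = \lambda_1 + \lambda_2 + \cdots + \lambda_r$.

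An alternative I would be happy to write instead — staying entirely within the tools already used in Section~4 — is a short induction on $r$. The base case $r = 1$ is exactly Lemma \ref{lm_h_one_variable}, which gives $h^{*}_{1}(\lambda_1) = \lambda_1$. For the inductive step, specialize the multiplicativity identity $h^{*}_{j_0}(\lambda_1, \lambda_2, \ldots, \lambda_r) = \sum_{k + j = j_0} h^{*}_{j}(\lambda_1 - k)\, h^{*}_{k}(\lambda_2, \ldots, \lambda_r)$ — the same instance of Proposition \ref{complete-shifted} that appears in the proof of Lemma \ref{recurrence_general} — to $j_0 = 1$. Only the pairs $(k, j) = (0, 1)$ and $(1, 0)$ contribute, so $h^{*}_{1}(\lambda_1, \ldots, \lambda_r) = h^{*}_{1}(\lambda_1) + h^{*}_{1}(\lambda_2, \ldots, \lambda_r) = \lambda_1 + h^{*}_{1}(\lambda_2, \ldots, \lambda_r)$, and the inductive hypothesis closes the argument; the $k = 0$ contribution is again just the empty-product convention.

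I do not expect any genuine obstacle here. The only place to be slightly careful is the bookkeeping of conventions: that empty products equal $1$ (which handles $k = 0$), and that the passage from the infinite sum in (\ref{complete}) to a finite sum over indices $i_j \le r$ is legitimate precisely because of the stability property, so that $h^{*}_{k}(\lambda)$ is a well-defined number for a partition $\lambda$. Given this, the lemma is immediate.
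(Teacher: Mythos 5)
Your proof is correct and follows the same route as the paper, which simply observes that both identities follow easily from Proposition \ref{complete-shifted}: the $k=0$ case is the empty product and the $k=1$ case is the single-factor sum $\sum_i x_i$. Your added remarks on stability and the optional induction are fine but not needed.
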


\begin{proof} It follows easily from Proposition \ref{complete-shifted}.
\end{proof}

\begin{lm}\label{inequality_h1}
If $\lambda_s\leq \lambda$ and $2\leq k\leq \lambda$, then
 \begin{equation}
 h^{*}_{k}(\lambda,\lambda_s)< h^{*}_{k}(\lambda+1,\lambda_s-1).\notag
\end{equation}
\end{lm}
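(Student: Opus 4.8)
The plan is to reduce the two-variable inequality $h^{*}_{k}(\lambda,\lambda_s)< h^{*}_{k}(\lambda+1,\lambda_s-1)$ to an explicit computation using Proposition~\ref{complete-shifted}, and then to handle the resulting polynomial inequality by a direct term-by-term comparison. First I would expand both sides via the two-variable specialization of \eqref{complete}: for a variable pair $(a,b)$ with $a\geq b$,
\[
h^{*}_{k}(a,b)=\sum_{i=0}^{k}(b-k+1)(b-k+2)\cdots(b-k+i)\,(a-k+i+1)(a-k+i+2)\cdots(a-k+k),
\]
where the term indexed by $i$ corresponds to choosing $i_1=\cdots=i_i=2$ (the ``$b$'' slot) and $i_{i+1}=\cdots=i_k=1$ (the ``$a$'' slot); equivalently, each term is $(b\fall i)_{\text{shifted}}\cdot(a-k+i+1\fall k-i)_{\text{shifted}}$ with the appropriate shifts read off from \eqref{complete}. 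By Lemma~\ref{lm_h_one_variable} this matches the one-variable base cases. So both $h^{*}_{k}(\lambda,\lambda_s)$ and $h^{*}_{k}(\lambda+1,\lambda_s-1)$ become sums of $k+1$ products of $k$ linear factors each.

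Next I would compare the two sums term by term, indexed by the same split parameter $i$. Writing $t_i(a,b)$ for the $i$-th summand, the claim $h^{*}_{k}(\lambda,\lambda_s)<h^{*}_{k}(\lambda+1,\lambda_s-1)$ would follow if I can show $t_i(\lambda,\lambda_s)\le t_i(\lambda+1,\lambda_s-1)$ for every $i$, with strict inequality for at least one $i$. For a fixed $i$, $t_i$ is a product of $i$ factors depending only on $b$ and $k-i$ factors depending only on $a$; passing from $(\lambda,\lambda_s)$ to $(\lambda+1,\lambda_s-1)$ increases each of the $a$-factors by $1$ and decreases each of the $b$-factors by $1$. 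The key point is to control the signs and magnitudes: since $\lambda_s\leq\lambda$ and $2\leq k\leq\lambda$, the $a$-factors $(\lambda-k+i+1),\ldots,\lambda$ are all positive, while the $b$-factors $(\lambda_s-k+1),\ldots,(\lambda_s-k+i)$ may be negative or zero; one has to argue that the product of the (possibly signed) $b$-block times the strictly positive $a$-block only increases. When the $b$-block is a product of positive integers this is elementary (AM–GM-type reasoning: $(a+1)(b-1)=ab+b-a-1\ge ab$ when $a\le b-1$, but here $a\ge b$ so one must be careful — more precisely, the whole chain of $a$-factors and $b$-factors should be paired up and compared). I expect the cleanest route is: the $i=0$ term is $(\lambda-k+1)\cdots\lambda$ versus $(\lambda-k+2)\cdots(\lambda+1)$, which is visibly strictly larger; and for each $i\ge 1$ I would show $t_i(\lambda+1,\lambda_s-1)-t_i(\lambda,\lambda_s)\ge 0$ by factoring out common positive terms and reducing to a one-line inequality between the ``new'' top $a$-factor $\lambda+1-k+k=\lambda+1$ (gained) and the ``lost'' bottom $b$-factor, tracking the sign of the remaining $b$-block.

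The main obstacle is precisely the sign bookkeeping in the $b$-block: because $\lambda_s$ can be small relative to $k$, several of the falling-factorial factors $(\lambda_s-k+j)$ are negative, so the naive ``each factor goes the right way'' argument fails and one instead needs to compare products of signed quantities. I would resolve this either (i) by splitting into the cases $\lambda_s\ge k$ (all factors nonnegative, elementary) and $\lambda_s<k$ (where in fact $h^{*}_{k}(\lambda_s)=0$ by Lemma~\ref{lm_h_one_variable}, killing the extreme terms and simplifying the sum), or (ii) by proving a slightly stronger monotonicity statement that is stable under the sign issues — e.g. that the \emph{partial} sums over $i$ are already monotone — which can then be pushed through by induction on $k$ using the recurrence \eqref{recurrence}. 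Option (ii) seems most robust and is likely the intended approach, since the recurrence $h^{*}_{k}(x_1,\ldots,x_n)=x_n h^{*}_{k-1}(x_1-1,\ldots,x_n-1)+h^{*}_{k}(x_1,\ldots,x_{n-1})$ applied with $n=2$ lets one reduce $h^{*}_{k}(\lambda,\lambda_s)$ to one-variable $h^{*}$'s (which are explicit by Lemma~\ref{lm_h_one_variable}) plus a lower-$k$ two-variable term, making an induction on $k$ with the base cases $k=0,1$ (Lemma~\ref{inequality_h0}) the natural frame.
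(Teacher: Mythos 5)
Your starting point---the two-variable specialization of Proposition \ref{complete-shifted}, i.e.\ $h^{*}_{k}(x,y)=\sum_{j=0}^{k}(x-j\fall k-j)(y\fall j)$---is the same as the paper's, but your central step, proving $t_j(\lambda,\lambda_s)\le t_j(\lambda+1,\lambda_s-1)$ for every index $j$, is false, and not only because of sign issues in the $b$-block. Take $k=2$ and $\lambda=\lambda_s=3$, where every factor is positive: the three terms of $h^{*}_{2}(3,3)$ are $6,6,6$ (sum $18$), while those of $h^{*}_{2}(4,2)$ are $12,6,2$ (sum $20$), so the $j=2$ term strictly decreases. In general the ``$b$-heavy'' terms (large $j$) genuinely go down when passing from $(\lambda,\lambda_s)$ to $(\lambda+1,\lambda_s-1)$, and the inequality holds only because the gain in the ``$a$-heavy'' terms outweighs that loss. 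This also undermines your fix (i): the case $\lambda_s\ge k$ is not ``elementary'' by termwise monotonicity. Your fix (ii), induction on $k$ via the recurrence (\ref{recurrence}), is not worked out and does not close in the obvious way: writing $h^{*}_{k}(\lambda,\lambda_s)=\lambda_s\, h^{*}_{k-1}(\lambda-1,\lambda_s-1)+h^{*}_{k}(\lambda)$ and $h^{*}_{k}(\lambda+1,\lambda_s-1)=(\lambda_s-1)\,h^{*}_{k-1}(\lambda,\lambda_s-2)+h^{*}_{k}(\lambda+1)$, the inductive hypothesis compares the two $h^{*}_{k-1}$ factors, but their coefficients $\lambda_s$ versus $\lambda_s-1$ go the wrong way, so an additional quantitative estimate would still be required.

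The paper's proof handles exactly this difficulty by a cross-pairing rather than a same-index comparison: it expresses both $h^{*}_{k}(\lambda+1,\lambda_s-1)-h^{*}_{k}(\lambda,\lambda_s-1)$ and $h^{*}_{k}(\lambda,\lambda_s)-h^{*}_{k}(\lambda,\lambda_s-1)$ as sums over $j=0,\dots,k-1$ and then pairs the $j$-th term with the $(k-1-j)$-th term, showing that each \emph{pair} of differences is positive (the middle term being treated separately when $k$ is odd). Some device of this kind---pairing the gain at small $j$ against the loss at large $j$, or a partial-sum/Abel-summation monotonicity argument---is the missing ingredient in your proposal.
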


\begin{proof} By Proposition \ref{complete-shifted},
\begin{equation}
h^{*}_{k}(x,y)=\sum_{j=0}^k   (x-j\fall k-j)(y \fall j).\notag
\end{equation}
 Therefore,
 \begin{equation}
h^{*}_{k}(\lambda+1,\lambda_s-1)-h^{*}_{k}(\lambda,\lambda_s-1)=\sum_{j=0}^{k-1} (k-j)  (\lambda-j\fall k-j-1)(\lambda_s-1 \fall j),\label{compare2}
\end{equation}
 and
 \begin{align}
h^{*}_{k}(\lambda,\lambda_s)-h^{*}_{k}(\lambda,\lambda_s-1) &=\sum_{j=1}^{k} j  (\lambda-j\fall k-j)(\lambda_s-1 \fall j-1).\notag\\
&=\sum_{j=0}^{k-1} (j+1)  (\lambda-j-1\fall k-j-1)(\lambda_s-1 \fall j).\label{compare1}
\end{align}

We shall compare equation (\ref{compare2}) with equation (\ref{compare1}). For $0\leq j<\frac{k-1}{2}$, the $j$-th and $(k-1-j)$-th term of the right side of equation (\ref{compare2}) are
\begin{align}
& (k-j)  (\lambda-j\fall k-j-1)(\lambda_s-1 \fall j),\label{eq_first1}\\
& (j+1)  (\lambda-k+1+j\fall j)(\lambda_s-1 \fall k-1-j).\label{eq_first2}
\end{align}
On the other hand, the $j$-th and $(k-1-j)$-th term of the right side of equation (\ref{compare1}) are
\begin{align}
&  (j+1)  (\lambda-j-1\fall k-j-1)(\lambda_s-1 \fall j),\label{eq_second1}\\
&  (k-j)  (\lambda-k+j\fall j)(\lambda_s-1 \fall k-1-j).\label{eq_second2}
\end{align}

When $j=0$, the sum (\ref{eq_first1}) $+$ (\ref{eq_first2}) $-$ (\ref{eq_second1}) $-$ (\ref{eq_second2}) is
 \begin{align}
& k  (\lambda\fall k-1)+(\lambda_s-1 \fall k-1)- (\lambda-1\fall k-1)-k(\lambda_s-1 \fall k-1)\notag\\
 & = ((\lambda\fall k-1)-(\lambda-1\fall k-1))+(k-1)((\lambda\fall k-1)-(\lambda_s-1 \fall k-1))\notag\\
  & = ((k-1)(\lambda-1\fall k-2))+(k-1)((\lambda\fall k-1)-(\lambda_s-1 \fall k-1))\notag\\
  &>0,\label{eq_final1}
 \end{align}
 where the last inequality follows from $k\geq 2$ and $\lambda\geq \lambda_{s}>\lambda_s-1$.

 Now for $1\leq j<\frac{k-1}{2}$,  (\ref{eq_first1}) $-$ (\ref{eq_second1}) is
\begin{equation}
\left((k-j)(\lambda-j)-(j+1)(\lambda-k+1)\right)(\lambda-j-1\fall k-j-2)(\lambda_s-1 \fall j),\label{eq_third1}
\end{equation}
 and (\ref{eq_first2}) $-$ (\ref{eq_second2}) is
 \begin{equation}
\left((j+1)(\lambda-k+1+j)-(k-j)(\lambda-k+1)\right)(\lambda-k+j\fall j-1)(\lambda_s-1 \fall k-1-j).\label{eq_third2}
\end{equation}
Since $\lambda\geq \lambda_s$ and $j<\frac{k-1}{2}$,
\begin{align}
(k-j)(\lambda-j)-(j+1)(\lambda-k+1) &=(k-(2j+1))\lambda+(k-1)+j(j-1)>0,\notag\\
(\lambda-j-1\fall k-j-2)(\lambda_s-1 \fall j)&\geq (\lambda-k+j\fall j-1)(\lambda_s-1 \fall k-1-j).\notag
\end{align}
Therefore the sum (\ref{eq_third1})+(\ref{eq_third2}) is at least
\begin{align}
((k-j)(k-j-1)+(j+1)j)(\lambda-k+j\fall j-1)(\lambda_s-1 \fall k-1-j)>0.\label{eq_final2}
\end{align}

If $k$ is odd, then $j$ can take value $\frac{k-1}{2}$. The $\frac{k-1}{2}$-th term on the right side of (\ref{compare2})   is
\begin{eqnarray}
\frac{k+1}{2}  \left(\lambda-\frac{k-1}{2} \left \fall  \frac{k-1}{2} \right. \right)\left(\lambda_s-1 \left \fall \frac{k-1}{2}\right.\right),\label{eq_fourth1}
\end{eqnarray}
and  the $\frac{k-1}{2}$-th term on the right side of (\ref{compare1})  is
\begin{equation}
 \frac{k+1}{2}  \left(\lambda-\frac{k+1}{2} \left \fall \frac{k-1}{2} \right.\right)\left(\lambda_s-1 \left \fall  \frac{k-1}{2} \right.\right).\label{eq_fourth2}
\end{equation}
Note that (\ref{eq_fourth1}) $-$ (\ref{eq_fourth2})  is
\begin{equation}
\frac{k+1}{2}\frac{k-1}{2}  \left(\lambda-\frac{k+1}{2} \left \fall \frac{k-1}{2}-1 \right.\right)\left(\lambda_s-1 \left \fall  \frac{k-1}{2} \right.\right)> 0.\label{eq_final3}
\end{equation}

From equations (\ref{eq_final1}), (\ref{eq_final2}) and (\ref{eq_final3}), we deduce that
\begin{align}
& h^{*}_{k}(\lambda+1,\lambda_s-1)- h^{*}_{k}(\lambda,\lambda_s)\notag\\
&=\left(h^{*}_{k}(\lambda+1,\lambda_s-1)-h^{*}_{k}(\lambda,\lambda_s-1) \right)\notag\\
&\hskip 2cm -\left(h^{*}_{k}(\lambda,\lambda_s)-h^{*}_{k}(\lambda,\lambda_s-1)\right)\notag\\
&> 0.\notag
\end{align}
\end{proof}

\begin{lm}\label{inequality_h2} Let $l\geq 1$ and
\begin{equation}
 h^{*}_{k}(\lambda,\lambda^l,\lambda)=h^{*}_{k}(\lambda,\underbrace{\lambda,\dots,\lambda}_{l\ \textnormal{times}},\lambda).\notag
\end{equation}
If $2\leq k\leq \lambda$, then
 \begin{equation}
 h^{*}_{k}(\lambda,\lambda^l,\lambda)< h^{*}_{k}(\lambda+1,\lambda^l,\lambda-1).\notag
\end{equation}
\end{lm}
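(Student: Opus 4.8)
Write $D_{l}(k;\lambda):=h^{*}_{k}(\lambda+1,\lambda^{l},\lambda-1)-h^{*}_{k}(\lambda,\lambda^{l},\lambda)$ (in the notation of the statement, so $\lambda^{l}$ and $(\lambda-1)^{l+2}$ denote strings of repeated arguments); the claim is $D_{l}(k;\lambda)>0$ for $2\le k\le\lambda$. I would fix $l\ge1$ and argue by induction on $k$, making the claim simultaneously for all integers $\lambda\ge k$. The first step is to strip off the last variable from both vectors using the recurrence (\ref{recurrence}). The last entry is $\lambda-1$ for $(\lambda+1,\lambda^{l},\lambda-1)$ and $\lambda$ for the constant vector; subtracting the two instances of (\ref{recurrence}) and splitting $\lambda\,h^{*}_{k-1}((\lambda-1)^{l+2})=(\lambda-1)h^{*}_{k-1}((\lambda-1)^{l+2})+h^{*}_{k-1}((\lambda-1)^{l+2})$ gives
\[
D_{l}(k;\lambda)\;=\;(\lambda-1)\Big(h^{*}_{k-1}(\lambda,(\lambda-1)^{l},\lambda-2)-h^{*}_{k-1}((\lambda-1)^{l+2})\Big)\;+\;G_{l}(k;\lambda),
\]
where $G_{l}(k;\lambda):=\big(h^{*}_{k}(\lambda+1,\lambda^{l})-h^{*}_{k}(\lambda,\lambda^{l})\big)-h^{*}_{k-1}((\lambda-1)^{l+2})$. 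The key structural point is that the first bracket is precisely $D_{l}(k-1;\lambda-1)$, so the recurrence reduces $D_{l}(k;\lambda)$ to a lower value of $k$ plus the ``defect'' $G_{l}(k;\lambda)$.

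The crux is then to pin down $G_{l}(k;\lambda)$. I would expand $h^{*}_{k}(\lambda+1,\lambda^{l})-h^{*}_{k}(\lambda,\lambda^{l})$ by Proposition \ref{complete-shifted}: only monomials using the single altered first variable survive, and grouping them by the number $p\ge1$ of factors that use it, the telescoping
\[
\prod_{t=1}^{p}(\lambda+1-k+t)-\prod_{t=1}^{p}(\lambda-k+t)\;=\;p\prod_{t=2}^{p}(\lambda-k+t)\;=\;p\,h^{*}_{p-1}(\lambda-k+p)
\]
collapses each group, while the accompanying product of falling factorials is $h^{*}_{p-1}(\lambda-k+p)\,h^{*}_{k-p}(\lambda)=h^{*}_{k-1}(\lambda)$ by Lemma \ref{lm_h_one_variable}, which is independent of $p$. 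Counting the remaining weakly increasing multi-indices of length $k-p$ then yields
\[
h^{*}_{k}(\lambda+1,\lambda^{l})-h^{*}_{k}(\lambda,\lambda^{l})\;=\;h^{*}_{k-1}(\lambda)\sum_{p=1}^{k}p\binom{l+k-p-1}{k-p}\;=\;\binom{l+k}{k-1}h^{*}_{k-1}(\lambda),
\]
the last equality being an elementary binomial identity. Since every entry of the last vector equals $\lambda-1$ and there are $\binom{l+k}{k-1}$ weakly increasing multi-indices of length $k-1$, also $h^{*}_{k-1}((\lambda-1)^{l+2})=\binom{l+k}{k-1}h^{*}_{k-1}(\lambda-1)$, so
\[
G_{l}(k;\lambda)\;=\;\binom{l+k}{k-1}\big(h^{*}_{k-1}(\lambda)-h^{*}_{k-1}(\lambda-1)\big)\;=\;(k-1)\binom{l+k}{k-1}h^{*}_{k-2}(\lambda-1)\;>\;0
\]
for $2\le k\le\lambda$, using the one-variable identity $h^{*}_{m}(\lambda)-h^{*}_{m}(\lambda-1)=m\,h^{*}_{m-1}(\lambda-1)$ (again from Lemma \ref{lm_h_one_variable}).

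With this the induction closes immediately. For the base case $k=2$ the first bracket above is $h^{*}_{1}(\lambda,(\lambda-1)^{l},\lambda-2)-h^{*}_{1}((\lambda-1)^{l+2})$, which vanishes by Lemma \ref{inequality_h0} because both vectors have part-sum $(l+2)(\lambda-1)$; hence $D_{l}(2;\lambda)=G_{l}(2;\lambda)=l+2>0$. For $k\ge3$ one has $\lambda-1\ge k-1\ge2$, so the inductive hypothesis gives $D_{l}(k-1;\lambda-1)>0$, and combined with $G_{l}(k;\lambda)>0$ this forces $D_{l}(k;\lambda)>0$. The step I expect to be the main obstacle is the evaluation of $G_{l}$: taken at face value the summand $-h^{*}_{k-1}((\lambda-1)^{l+2})$ is negative and of the same order of magnitude as the positive term beside it — this reflects the genuine competition between raising the first part and lowering the last part of the diagram — so the recurrence alone says nothing, and it is only the cancellation that reduces $G_{l}$ to the manifestly positive $(k-1)\binom{l+k}{k-1}h^{*}_{k-2}(\lambda-1)$ which makes the argument go through.
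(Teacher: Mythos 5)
Your proof is correct, and it takes a genuinely different route from the paper's. The paper pivots on the common comparison point $h^{*}_{k}(\lambda,\lambda^{l},\lambda-1)$: using the explicit three-block expansion (\ref{leq_first}) it bounds the gain $h^{*}_{k}(\lambda+1,\lambda^{l},\lambda-1)-h^{*}_{k}(\lambda,\lambda^{l},\lambda-1)$ strictly below by $\binom{k+l+1}{l+2}(\lambda-1\downarrow k-1)$ and evaluates the loss $h^{*}_{k}(\lambda,\lambda^{l},\lambda)-h^{*}_{k}(\lambda,\lambda^{l},\lambda-1)$ as exactly that quantity, with no induction. You instead strip the last variable via the recurrence (\ref{recurrence}) to obtain the exact relation $D_{l}(k;\lambda)=(\lambda-1)D_{l}(k-1;\lambda-1)+G_{l}(k;\lambda)$ and induct on $k$; I checked the key evaluations — the telescoping $\prod_{t=1}^{p}(\lambda+1-k+t)-\prod_{t=1}^{p}(\lambda-k+t)=p\prod_{t=2}^{p}(\lambda-k+t)$, the factorization $h^{*}_{p-1}(\lambda-k+p)h^{*}_{k-p}(\lambda)=h^{*}_{k-1}(\lambda)$, the double hockey-stick identity $\sum_{p=1}^{k}p\binom{l+k-p-1}{k-p}=\binom{l+k}{k-1}$, and the resulting $G_{l}(k;\lambda)=(k-1)\binom{l+k}{k-1}h^{*}_{k-2}(\lambda-1)>0$ — and they are all correct (e.g. $D_{1}(3;3)=2\cdot3+24=30$ agrees with direct computation of $h^{*}_{3}(4,3,2)-h^{*}_{3}(3,3,3)$), as is the base case $D_{l}(2;\lambda)=l+2$ via Lemma \ref{inequality_h0}. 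What each approach buys: the paper's is a one-shot computation whose combinatorial bookkeeping is concentrated in the double-sum expansion (\ref{leq_first}); yours trades that for an induction but only ever expands the simpler two-block vector $(\lambda+1,\lambda^{l})$ and a constant vector, and as a bonus produces an exact first-order recursion for the difference $D_{l}(k;\lambda)$ itself, which is strictly more information than the inequality. Both proofs ultimately rest on the same phenomenon you correctly identify as the crux: the competition between the two terms resolves only because of an exact cancellation, not because of any termwise domination.
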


\begin{proof} By Proposition \ref{complete-shifted},
\begin{align}
&(x-j-r \fall k-j-r)(\lambda-j\fall r)(y \fall j)\notag\\
&=(x-k+1)\cdots (x-j-r)(\lambda-j-r+1)\cdots (\lambda-j)(y-j+1)\cdots (y),\notag
\end{align}
is a term in the sum of $h^{*}_{k}(x,\lambda^l,y)$. In fact, there are $\binom{r+l-1}{l-1}$ such terms. Therefore
\begin{equation}
h^{*}_{k}(x,\lambda^l,y)=\sum_{j=0}^k \sum_{r=0}^{k-j}\binom{r+l-1}{l-1}  (x-j-r \fall k-j-r)(\lambda-j\fall r)(y \fall j).\label{leq_first}
\end{equation}
From (\ref{leq_first}),
\begin{align}
&h^{*}_{k}(x+1,\lambda^l,y)-h^{*}_{k}(x,\lambda^l,y)\notag\\
&= \sum_{j=0}^{k-1} \sum_{r=0}^{k-1-j}(k-j-r)\binom{r+l-1}{l-1}  (x-j-r \fall k-1-j-r)(\lambda-j\fall r)(y \fall j).\label{leq_difference}
\end{align}
Now replacing $x$ with $\lambda$ and $y$ with $\lambda-1$ in (\ref{leq_difference}), we obtain
\begin{align}
&h^{*}_{k}(\lambda+1,\lambda^l,\lambda-1)-h^{*}_{k}(\lambda,\lambda^l,\lambda-1)\notag\\
&= \sum_{j=0}^{k-1} \sum_{r=0}^{k-1-j}(k-j-r)\binom{r+l-1}{l-1}  (\lambda-j\fall k-1-j)(\lambda-1 \fall j)\notag\\
&=\sum_{j=0}^{k-1} \binom{k-j+l}{l+1}  (\lambda-j\fall k-1-j)(\lambda-1 \fall j)\notag\\
&=\sum_{j=0}^{k-1} \binom{k-j+l}{l+1}(\lambda-j)  (\lambda-1 \fall k-2)\notag\\
&> \sum_{j=0}^{k-1} \binom{k-j+l}{l+1}(\lambda-k+1)  (\lambda-1 \fall k-2)\notag\\
&=\sum_{j=0}^{k-1} \binom{k-j+l}{l+1}(\lambda-1 \fall k-1)\notag\\
&= \binom{k+l+1}{l+2} (\lambda-1\fall k-1).\label{leq_compare_1}
\end{align}
From (\ref{leq_first}),
\begin{align}
&h^{*}_{k}(x,\lambda^l,y)-h^{*}_{k}(x,\lambda^l,y-1)\notag\\
&= \sum_{j=1}^{k} \sum_{r=0}^{k-j}j\binom{r+l-1}{l-1}  (x-j-r \fall k-j-r)(\lambda-j\fall r)(y-1\fall j-1).\label{leq_difference2}
\end{align}
Now replacing $x$ with $\lambda$ and $y$ with $\lambda$ in (\ref{leq_difference2}), we obtain
\begin{align}
&h^{*}_{k}(\lambda,\lambda^l,\lambda)-h^{*}_{k}(\lambda,\lambda^l,\lambda-1)\notag\\
&=  \sum_{j=1}^{k} \sum_{r=0}^{k-j}j\binom{r+l-1}{l-1}  (\lambda-j\fall k-j)(\lambda-1\fall j-1)\notag\\
&=  \sum_{j=1}^{k} \sum_{r=0}^{k-j}j\binom{r+l-1}{l-1} (\lambda-1\fall k-1)\notag\\
&= \sum_{j=1}^{k} j\binom{k-j+l}{l} (\lambda-1\fall k-1)\notag\\
&= \binom{k+l+1}{l+2} (\lambda-1\fall k-1).\label{leq_compare_2}
\end{align}
By equations (\ref{leq_compare_1}) and (\ref{leq_compare_2}), we deduce that
\begin{equation}
h^{*}_{k}(\lambda+1,\lambda^l,\lambda-1)-h^{*}_{k}(\lambda,\lambda^l,\lambda)>0.\notag
\end{equation}
\end{proof}

\begin{lm}\label{lm_special_1} Let $r\geq 3$. If
\begin{align}
&(\lambda_1,\dots, \lambda_{r-2},\lambda_{r-1},\lambda_r),\notag\\
&(\lambda_1,\dots, \lambda_{r-2},\lambda_{r-1}+1,\lambda_r-1),\notag
\end{align}
are two partitions of $n$, then
\begin{equation}
f(\lambda_1,\dots, \lambda_{r-2},\lambda_{r-1},\lambda_r)<f(\lambda_1,\dots, \lambda_{r-2},\lambda_{r-1}+1,\lambda_r-1).\notag
\end{equation}
\end{lm}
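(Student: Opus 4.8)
The plan is to expand both $f(\lambda_{1},\dots,\lambda_{r-2},\lambda_{r-1},\lambda_{r})$ and $f(\lambda_{1},\dots,\lambda_{r-2},\lambda_{r-1}+1,\lambda_{r}-1)$ by Lemma \ref{recurrence_general} with $m=r-1$ (legitimate precisely because $r\ge 3$), and then compare the two sums term by term. Since the first $r-2$ parts of the two partitions coincide, writing $g_{k}=f(\lambda_{1}-k,\dots,\lambda_{r-2}-k)$ one obtains
\begin{align*}
f(\lambda_{1},\dots,\lambda_{r-1},\lambda_{r})&=\sum_{k=0}^{\lambda_{r-1}}h^{*}_{k}(\lambda_{r-1},\lambda_{r})\,g_{k},\\
f(\lambda_{1},\dots,\lambda_{r-1}+1,\lambda_{r}-1)&=\sum_{k=0}^{\lambda_{r-1}+1}h^{*}_{k}(\lambda_{r-1}+1,\lambda_{r}-1)\,g_{k},
\end{align*}
with the \emph{same} coefficients $g_{k}$. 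The hypothesis that $(\lambda_{1},\dots,\lambda_{r-2},\lambda_{r-1}+1,\lambda_{r}-1)$ is a partition of $n$ forces $\lambda_{r-2}\ge\lambda_{r-1}+1$ and $\lambda_{r}\ge 2$ (hence $\lambda_{r-1}\ge\lambda_{r}\ge 2$); in particular every $k$ with $0\le k\le\lambda_{r-1}+1$ satisfies $k\le\lambda_{r-2}\le\lambda_{i}$ for $i\le r-2$, so each $g_{k}$ is $|\eta_{\nu}|$ for an honest partition $\nu$ and therefore $g_{k}\ge 0$.

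Next I would subtract. By Lemma \ref{inequality_h0} the coefficients of $g_{0}$ and $g_{1}$ are $1$ and $\lambda_{r-1}+\lambda_{r}$, hence unchanged under $(\lambda_{r-1},\lambda_{r})\mapsto(\lambda_{r-1}+1,\lambda_{r}-1)$, and by the Vanishing Theorem $h^{*}_{\lambda_{r-1}+1}(\lambda_{r-1},\lambda_{r})=0$ since $(\lambda_{r-1}+1)\not\subseteq(\lambda_{r-1},\lambda_{r})$. Therefore
\[
f(\lambda_{1},\dots,\lambda_{r-1}+1,\lambda_{r}-1)-f(\lambda_{1},\dots,\lambda_{r-1},\lambda_{r})
=\sum_{k=2}^{\lambda_{r-1}+1}\Bigl(h^{*}_{k}(\lambda_{r-1}+1,\lambda_{r}-1)-h^{*}_{k}(\lambda_{r-1},\lambda_{r})\Bigr)g_{k}.
\]
For $2\le k\le\lambda_{r-1}$ the coefficient is strictly positive by Lemma \ref{inequality_h1} (applied with $\lambda=\lambda_{r-1}$ and $\lambda_{s}=\lambda_{r}\le\lambda_{r-1}$), and for $k=\lambda_{r-1}+1$ it equals $h^{*}_{\lambda_{r-1}+1}(\lambda_{r-1}+1,\lambda_{r}-1)=s^{*}_{(\lambda_{r-1}+1)}(\lambda_{r-1}+1,\lambda_{r}-1)$, which is positive because $(\lambda_{r-1}+1)\subseteq(\lambda_{r-1}+1,\lambda_{r}-1)$ and, by Theorems \ref{dimension} and \ref{vanish}, $s^{*}_{\mu}$ is a nonnegative multiple of a number of standard tableaux and so is positive whenever its partition argument contains $\mu$. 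Hence every summand above is $\ge 0$.

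It remains to rule out that the difference is $0$, i.e. that every $g_{k}$ with $2\le k\le\lambda_{r-1}+1$ vanishes; this is the only step that is not purely computational, and is the main obstacle. The ASP (Theorem \ref{alternating-sign}), together with the boundary values $\eta_{\emptyset}=1$ and $\eta_{(1)}=0$, shows that $(1)$ is the \emph{unique} partition with vanishing eigenvalue. Hence $g_{k}=0$ would force the partition $(\lambda_{1}-k,\dots,\lambda_{r-2}-k)$ (trailing zeros deleted) to equal $(1)$, and in particular $\lambda_{1}-k=1$, i.e. $k=\lambda_{1}-1$; so at most one index $k$ can have $g_{k}=0$. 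Since $\lambda_{r-1}\ge 2$, both $k=2$ and $k=3$ lie in $\{2,\dots,\lambda_{r-1}+1\}$, and they cannot simultaneously equal $\lambda_{1}-1$, so at least one of $g_{2},g_{3}$ is positive and the corresponding summand is strictly positive. Therefore $f(\lambda_{1},\dots,\lambda_{r-2},\lambda_{r-1}+1,\lambda_{r}-1)>f(\lambda_{1},\dots,\lambda_{r-2},\lambda_{r-1},\lambda_{r})$, which is exactly the claim; the delicate point throughout is upgrading the term-by-term inequality from $\ge$ to $>$, which rests entirely on the non-vanishing of eigenvalues for partitions other than $(1)$.
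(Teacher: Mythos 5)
Your proof is correct and follows essentially the same route as the paper's: expand both sides via Lemma \ref{recurrence_general} with $m=r-1$ and compare coefficients termwise using Lemmas \ref{inequality_h0} and \ref{inequality_h1}. The only differences are that you make explicit the strictness step (locating some $k\ge 2$ with $g_k>0$ via the ASP and the fact that $(1)$ is the unique partition with vanishing eigenvalue), which the paper leaves implicit, and that you handle the top term $k=\lambda_{r-1}+1$ via the Vanishing Theorem rather than simply discarding it as a nonnegative term.
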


\begin{proof} By Lemma \ref{recurrence_general},
\begin{equation}
f(\lambda_1,\dots, \lambda_{r-2},\lambda_{r-1},\lambda_r)=\sum_{k=0}^{\lambda_{r-1}} h^{*}_{k}(\lambda_{r-1}, \lambda_r) f(\lambda_1-k,\lambda_2-k,\dots, \lambda_{r-2}-k),\notag
\end{equation}
and
\begin{align}
f(\lambda_1,\dots, \lambda_{r-2},\lambda_{r-1}+1,\lambda_r-1)&=\sum_{k=0}^{\lambda_{r-1}+1} h^{*}_{k}(\lambda_{r-1}+1, \lambda_r-1) f(\lambda_1-k,\lambda_2-k,\dots, \lambda_{r-2}-k)\notag\\
&\geq \sum_{k=0}^{\lambda_{r-1}} h^{*}_{k}(\lambda_{r-1}+1, \lambda_r-1) f(\lambda_1-k,\lambda_2-k,\dots, \lambda_{r-2}-k).\notag
\end{align}
The lemma then follows from Lemma \ref{inequality_h0} and Lemma \ref{inequality_h1}.
\end{proof}

\begin{lm}\label{lm_special_2} If  $l\geq 1$ and
\begin{align}
&(\lambda_1,\dots, \lambda_{r},\lambda,\lambda^l,\lambda),\notag\\
&(\lambda_1,\dots, \lambda_{r},\lambda+1,\lambda^l,\lambda-1),\notag
\end{align}
are two partitions of $n$, then
\begin{equation}
f(\lambda_1,\dots, \lambda_{r},\lambda,\lambda^l,\lambda)<f(\lambda_1,\dots, \lambda_{r},\lambda+1,\lambda^l,\lambda-1).\notag
\end{equation}
\end{lm}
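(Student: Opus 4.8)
The plan is to expand both quantities via Lemma~\ref{recurrence_general}, stripping off the entire tail block of $l+2$ equal entries, and then to compare the two expansions coefficientwise, using Lemma~\ref{inequality_h0} and Lemma~\ref{inequality_h2} for the coefficients. Write the prefix as $(\lambda_1,\dots,\lambda_r)$; we must take $r\ge 1$ (as noted at the end, this hypothesis cannot be dropped). The partition hypothesis forces $\lambda_r\ge\lambda+1$, hence also $\lambda\ge 2$ since $\lambda-1$ must be a genuine part. Applying Lemma~\ref{recurrence_general} with $m=r+1$ gives
\[
f(\lambda_1,\dots,\lambda_r,\lambda,\lambda^l,\lambda)=\sum_{k=0}^{\lambda}h^{*}_{k}(\lambda,\lambda^l,\lambda)\,f(\lambda_1-k,\dots,\lambda_r-k)
\]
and
\[
f(\lambda_1,\dots,\lambda_r,\lambda+1,\lambda^l,\lambda-1)=\sum_{k=0}^{\lambda+1}h^{*}_{k}(\lambda+1,\lambda^l,\lambda-1)\,f(\lambda_1-k,\dots,\lambda_r-k);
\]
no entry $\lambda_i-k$ is negative since $k\le\lambda+1\le\lambda_r$. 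Subtracting, the difference equals $\sum_{k\ge 0}c_k\,g_k$, where $c_k=h^{*}_{k}(\lambda+1,\lambda^l,\lambda-1)-h^{*}_{k}(\lambda,\lambda^l,\lambda)$ and $g_k=f(\lambda_1-k,\dots,\lambda_r-k)\ge 0$.

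Next I would verify $c_k\ge 0$ for every $k$, with $c_k>0$ exactly for $k\in\{2,3,\dots,\lambda+1\}$. Indeed $c_0=c_1=0$ by Lemma~\ref{inequality_h0} (both partitions have the same size $(l+2)\lambda$); for $2\le k\le\lambda$ the strict inequality $c_k>0$ is precisely Lemma~\ref{inequality_h2}; for $k=\lambda+1$ one has $h^{*}_{\lambda+1}(\lambda,\lambda^l,\lambda)=0$ (its first argument $\lambda$ is smaller than $\lambda+1$) while $h^{*}_{\lambda+1}(\lambda+1,\lambda^l,\lambda-1)=s^{*}_{(\lambda+1)}(\lambda+1,\lambda^l,\lambda-1)>0$ by Theorem~\ref{dimension}, because $(\lambda+1)\subseteq(\lambda+1,\lambda^l,\lambda-1)$; and $c_k=0$ for $k\ge\lambda+2$. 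Hence
\[
f(\lambda_1,\dots,\lambda_r,\lambda+1,\lambda^l,\lambda-1)-f(\lambda_1,\dots,\lambda_r,\lambda,\lambda^l,\lambda)=\sum_{k=2}^{\lambda+1}c_k\,g_k\ge 0 .
\]

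It remains to promote this to a strict inequality, i.e.\ to exhibit one $k\in\{2,\dots,\lambda+1\}$ with $g_k>0$. Recall that $f(\nu)=|\eta_\nu|$ vanishes only when $\nu=(1)$: we have $\eta_\emptyset=1$, $\eta_{(1)}=0$, and $\eta_\nu\ne 0$ for $|\nu|\ge 2$ by the ASP (Theorem~\ref{alternating-sign}). Thus $g_k=0$ would force the partition obtained from $(\lambda_1-k,\dots,\lambda_r-k)$ by deleting its zero parts to equal $(1)$, which happens only if $\lambda_1=k+1$ and $\lambda_2=\dots=\lambda_r=k$ --- a condition determining $k$ uniquely. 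Since $\{2,\dots,\lambda+1\}$ contains $\lambda\ge 2$ integers, at most one of which can be ``bad'' in this sense, there is a $k$ in this range with $c_k>0$ and $g_k>0$, and the strict inequality follows.

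The main obstacle is this final strictness step: the coefficientwise comparison only produces a weak inequality, and one must rule out the degenerate scenario in which every surviving coefficient is multiplied by a vanishing eigenvalue; the key point is that only one value of $k$ can collapse $(\lambda_1-k,\dots,\lambda_r-k)$ to $(1)$, while the relevant index range has at least two elements. A minor but necessary observation is that a nonempty prefix ($r\ge 1$) is genuinely required: when $r=0$ the statement fails, e.g.\ $f(2,2,2)=7>5=f(3,2,1)$, and there is no analogue of the above reduction with an empty prefix.
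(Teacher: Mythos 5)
Your proof is correct and follows the paper's own argument exactly: expand both sides via Lemma~\ref{recurrence_general} at the start of the constant tail block, keep (or discard) the $k=\lambda+1$ term, and compare the coefficients termwise using Lemma~\ref{inequality_h0} for $k=0,1$ and Lemma~\ref{inequality_h2} for $2\le k\le\lambda$. You additionally supply two details the paper leaves implicit --- the strictness step (ruling out the degenerate case where every strictly positive coefficient multiplies a vanishing $f$-value, which can only happen for the single index $k=\lambda_1-1$) and the necessity of a nonempty prefix $r\ge 1$ (guaranteed in the paper's application since $m_1\ge 2$) --- and both observations are correct.
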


\begin{proof} By Lemma \ref{recurrence_general},
\begin{equation}
f(\lambda_1,\dots, \lambda_{r},\lambda,\lambda^l,\lambda)=\sum_{k=0}^{\lambda} h^{*}_{k}(\lambda,\lambda^l,\lambda) f(\lambda_1-k,\lambda_2-k,\dots, \lambda_{r}-k),\notag
\end{equation}
and
\begin{align}
f(\lambda_1,\dots, \lambda_{r},\lambda+1,\lambda^l,\lambda-1)&=\sum_{k=0}^{\lambda+1} h^{*}_{k}(\lambda+1,\lambda^l,\lambda-1) f(\lambda_1-k,\lambda_2-k,\dots, \lambda_{r}-k)\notag\\
&\geq\sum_{k=0}^{\lambda} h^{*}_{k}(\lambda+1,\lambda^l,\lambda-1) f(\lambda_1-k,\lambda_2-k,\dots, \lambda_{r}-k).\notag
\end{align}
The lemma then follows from Lemma \ref{inequality_h0} and Lemma \ref{inequality_h2}.
\end{proof}

\section{Proof of Theorem \ref{thm_inequality}}

\begin{proof} By Lemma \ref{raise}, it is sufficient to show that the inequality holds for $\lambda<_1\lambda'$, i.e. if $\lambda <_1 \lambda'$ then $|\eta_{\lambda}| < |\eta_{\lambda'}|$.
 
 Let $2\leq m_1<m_2\leq r$ be such that
\begin{align}
\lambda & =(\lambda_1,\dots,\lambda_{m_1-1},\lambda_{m_1},\lambda_{m_1+1},\dots,  \lambda_{m_2-1},\lambda_{m_2},\lambda_{m_2+1},\dots, \lambda_{r})\notag\\
\lambda' & =(\lambda_1,\dots,\lambda_{m_1-1},\lambda_{m_1}+1,\lambda_{m_1+1},\dots , \lambda_{m_2-1},\lambda_{m_2}-1,\lambda_{m_2+1},\dots, \lambda_{r}).\notag
\end{align}
We shall prove by induction on $n$. Clearly, Theorem \ref{thm_inequality} holds for small values of $n$. We shall distinguish two cases.

\vskip 0.5cm
\noindent
{\bf Case 1.} $m_2\neq r$. Then by Theorem \ref{alternating-sign} and Theorem \ref{formula},
\begin{eqnarray}
\vert \eta_{\lambda}\vert & = & \lambda_{r} \vert \eta_{\lambda-\hat{c}}\vert + \vert \eta_{\lambda-\hat{l}}\vert.\notag
\end{eqnarray}
Note that   $\lambda-\hat{c}<_1\lambda'-\hat{c}$ and $\lambda-\hat{l}<_1\lambda'-\hat{l}$. So, by induction,
\begin{eqnarray}
\vert \eta_{\lambda}\vert & = & \lambda_{r} \vert \eta_{\lambda-\hat{c}}\vert + \vert \eta_{\lambda-\hat{l}}\vert<\lambda_{r} \vert \eta_{\lambda'-\hat{c}}\vert + \vert \eta_{\lambda'-\hat{l}}\vert=\vert \eta_{\lambda'}\vert.\notag
\end{eqnarray}

\vskip 0.5cm
\noindent
{\bf Case 2.} $m_2=r$. If $m_1=r-1$, then it follows from Lemma \ref{lm_special_1} that $\vert \eta_{\lambda}\vert<\vert \eta_{\lambda'}\vert$. Suppose $m_1<r-1$.

Let $m_3$ be the largest integer such that
\begin{equation}
\lambda''  =(\lambda_1,\dots,\lambda_{m_3-1},\lambda_{m_3}+1,\lambda_{m_3+1},\dots,\lambda_{r}-1),\notag
\end{equation}
is a partition of $n$. Note that $m_1\leq m_3$. By the choice of $m_3$, we must have
\begin{equation}
\lambda_{m_3-1}>\lambda_{m_3}=\lambda_{m_3+1}=\cdots=\lambda_{r-1}.\notag
\end{equation}
If $\lambda_r=\lambda_{r-1}$, then by Lemma \ref{lm_special_2}, $\vert \eta_{\lambda}\vert<\vert \eta_{\lambda''}\vert$. If $\lambda_r<\lambda_{r-1}$, then by Case 1, $\vert \eta_{\lambda}\vert<\vert \eta_{\lambda'''}\vert$, where
\begin{equation}
\lambda'''  =(\lambda_1,\dots,\lambda_{m_3-1},\lambda_{m_3}+1,\lambda_{m_3+1},\dots,\lambda_{r-1}-1,\lambda_r).\notag
\end{equation}
By Lemma \ref{lm_special_1}, $\vert \eta_{\lambda'''}\vert<\vert \eta_{\lambda''}\vert$. Thus $\vert \eta_{\lambda}\vert<\vert \eta_{\lambda''}\vert$.

In either case, $\vert \eta_{\lambda}\vert<\vert \eta_{\lambda''}\vert$. If $m_1=m_3$, then we are done. If $m_1<m_3$, then by Case 1, $\vert \eta_{\lambda''}\vert<\vert \eta_{\lambda'}\vert$. Hence $\vert \eta_{\lambda}\vert<\vert \eta_{\lambda'}\vert$. This completes the proof of the theorem.
\end{proof}

\begin{section}{Some Values of $\eta_\lambda$}\label{values}

In this section we reproduce some of the eigenvalues of $\Gamma_{n}$ for small $n$ as given in \cite{Ku-Wales}.

\medskip


{\small
\begin{center}
\begin{minipage}[t]{5cm}
\begin{tabular}{|rr|}
\multicolumn{2}{c} {$n=2$}  \\
\hline
$\lambda$ &  $\eta_{\lambda}$  \\
\hline
$2$& $1$  \\
$1^{2}$& $-1$  \\
\hline
\end{tabular}
\end{minipage}
\hspace{2cm}
\begin{minipage}[t]{5cm}
\begin{tabular}{|rr|}
\multicolumn{2}{c} {$n=3$} \\
\hline
$\lambda$ &  $\eta_{\lambda}$  \\
\hline
$3$& $2$ \\
$2,1$& $-1$ \\
$1^{3}$& $2$ \\
\hline
\end{tabular}
\end{minipage}
\end{center}


\begin{center}
\begin{minipage}[t]{5cm}
\begin{tabular}{|rr|r|rr|}
\multicolumn{5}{c} {$n=4$} \\
\hline
$\lambda$ &  $\eta_{\lambda}$ & \ \ \ & $\lambda$ &  $\eta_{\lambda}$  \\
\hline
$4$& $9$ & & $2,1^{2}$ & $1$ \\
$3,1$& $-3$ & &  $1^{4}$ & $-3$ \\
$2,2$& $3$ & &  &  \\
\hline
\end{tabular}
\end{minipage}
\hspace{2cm}
\begin{minipage}[t]{5cm}
\begin{tabular}{|rr|r|rr|}
\multicolumn{5}{c} {$n=5$} \\
\hline
$\lambda$ &  $\eta_{\lambda}$ & \ \ \ & $\lambda$ &  $\eta_{\lambda}$  \\
\hline
$5$& $44$ & & $2^{2},1$ & $-4$ \\
$4,1$& $-11$ & & $2,1^{3}$ & $-1$ \\
$3,2$& $4$ & & $1^{5}$ & $4$ \\
$3,1^{2}$& $4$ & &  &  \\
\hline
\end{tabular}
\end{minipage}
\end{center}


\begin{center}
\begin{minipage}[t]{5cm}
\begin{tabular}{|rr|r|rr|}
\multicolumn{5}{c} {$n=6$} \\
\hline
$\lambda$ &  $\eta_{\lambda}$ & \ \ \ & $\lambda$ &  $\eta_{\lambda}$  \\
\hline
$ 6 $  & $265$ & & $ 3, 1^{3}$ & $-5$\\
$ 5, 1 $  & $-53$ & & $ 2^{3} $ & $7$\\
$ 4, 2 $ & $15$ & & $ 2^{2}, 1^{2} $ & $5$\\
$4, 1^{2} $ & $13$ & & $ 2, 1^{4} $ & $ 1$\\
$3^{2}$ & $-11$ & & $ 1^{6} $ & $-5$ \\
$3, 2, 1$ & $-5$ & & &\\
\hline
\end{tabular}
\end{minipage}
\hspace{2cm}
\begin{minipage}[t]{5cm}
\begin{tabular}{|rr|r|rr|}
\multicolumn{5}{c} {$n=7$} \\
\hline
$\lambda$ &  $\eta_{\lambda}$ & \ \ \ & $\lambda$ &  $\eta_{\lambda}$  \\
\hline
$ 7 $ & $1854$  & & $3, 2^{2} $ & $ 6$\\
$6, 1 $ & $-309$ & & $3, 2, 1^{2}$ & $6$\\
$5, 2 $ & $66$ & & $3, 1^{4} $ & $ 6$\\
$5, 1, 1$ & $62$ & &$2^{3}, 1 $ & $-9$ \\
$4, 3 $ & $-21$ & &$2^{2}, 1^{3} $ & $-6$ \\
$4, 2, 1$ & $-18$ & &$2, 1^{5}$ & $ -1$ \\
$4, 1^{3} $ & $ -15$ & &$1^{7}$ & $ 6$ \\
$3^{2}, 1 $ & $ 14$ & & & \\
\hline
\end{tabular}
\end{minipage}
\end{center}


\begin{center}
\begin{minipage}[t]{5cm}
\begin{tabular}{|rr|r|rr|}
\multicolumn{5}{c} {$n=8$} \\
\hline
$\lambda$ &  $\eta_{\lambda}$ & \ \ \ & $\lambda$ &  $\eta_{\lambda}$  \\
\hline
$ 8 $ & $ 14833$ & & $4, 1^{4}$ & $ 17$\\
$7, 1$ &  $-2119$ &&$3^{2}, 2$ & $-19$ \\
$6, 2$ & $371$ & & $3^{2}, 1^{2}$ & $ -17$\\
$6, 1^{2}$ & $353$ & & $3, 2^{2}, 1$ & $-7$\\
$5, 3$ & $-89$ & & $3, 2, 1^{3}$ & $-7$\\
$5, 2, 1$ & $-77$ & & $3, 1^{5}$ & $-7$\\
$5, 1^{3}$ & $ -71$ & & $2^{4}$ & $13$\\
$4^{2}$ & $53$ & &$2^{3}, 1^{2}$ & $11$ \\
$4, 3, 1$ & $25$ & & $2^{2}, 1^{4}$ & $7$\\
$4, 2^{2}$ & $ 23$ & & $2, 1^{6}$ & $1$ \\
$4, 2, 1^{2}$ & $ 21$ & &$1^{8}$ & $-7$ \\
\hline
\end{tabular}
\end{minipage}
\hspace{2cm}
\begin{minipage}[t]{5cm}
\begin{tabular}{|rr|r|rr|}
\multicolumn{5}{c} {$n=9$} \\
\hline
$\lambda$ &  $\eta_{\lambda}$ & \ \ \ & $\lambda$ &  $\eta_{\lambda}$  \\
\hline
$9$ & $ 133496$ &&$4, 2^{2}, 1$ & $  -27$ \\
$8, 1$ & $-16687$ && $4, 2, 1^{3}$ & $  -24$\\
$7, 2$ & $2472$ &&$4, 1^{5}$ & $  -19$ \\
$7, 1^{2}$ & $2384$ && $3^{3}$ & $  32$\\
$6, 3$ & $ -463$ &&$3^{2}, 2, 1$ & $ 23$ \\
$6, 2, 1$ & $-424$ && $3^{2}, 1^{3}$ & $  20$\\
$6, 1^{3}$ & $-397$ &&$3, 2^{3}$ & $    8$ \\
$5, 4$ & $128$ && $3, 2^{2}, 1^{2}$ & $  8$\\
$5, 3, 1$ & $104$ && $3, 2, 1^{4}$ & $  8$\\
$5, 2^{2}$ & $92$ &&$3, 1^{6}$ & $  8$ \\
$5, 2, 1^{2}$ & $ 88$ &&$2^{4}, 1$ & $  -16$ \\
$5, 1^{4}$ & $  80$ && $2^{3}, 1^{3}$ & $  -13$\\
$4^{2}, 1$ & $  -64$ &&$2^{2}, 1^{5}$ & $  -8$ \\
$4, 3, 2$ & $  -31$ && $2, 1^{7}$ & $ -1$\\
$4, 3, 1^{2}$ & $  -29$ && $1^{9}$ & $  8$\\
\hline
\end{tabular}
\end{minipage}
\end{center}

\begin{center}
\begin{tabular}{|rr|r|rr|r|rr|r|rr|}
\multicolumn{11}{c} {$n=10$} \\
\hline
$\lambda$ &  $\eta_{\lambda}$ & \ \ \ & $\lambda$ &  $\eta_{\lambda}$ & \ \ \  &  $\lambda$ & $\eta_{\lambda}$&  \ \ \ &$\lambda$ & $\eta_{\lambda}$ \\
\hline
$ 10  $ & $  1334961 $ & & $  6,1^4  $ & $ 441  $ & & $ 4,3,2,1$ & $36$ & &$ 3,2^2,1^3$ & $-9$ \\
$ 9,1   $ & $ -148329  $ & & $5,5 $ & $-309 $  & & $4,3,1^3 $ & $33$& &$3,2,1^5$  &$-9$ \\
$  8,2  $ & $  19071 $ & &  $5,4,1  $ & $ -149$  & & $4,2^3 $ & $33$& &$3,1^7$  &$-9$ \\
$  8,1^2  $ & $  18541 $ & & $5,3,2 $ & $-125 $  & & $4,2^2,1^2 $ & $31$& &$2^5$  &$21$ \\
$  7,3  $ & $   -2967$ & & $5.3.1^2 $ & $-119 $  & & $4,2,1^4 $ & $27$& &$2^4,1^2$  &$19$ \\
$ 7,2,1   $ & $  -2781$ & & $5,2^2,1 $ & $-105 $  & & $4,1^6 $ & $21$& &$2^3,1^4$  &$15$ \\
$ 7,1^3   $ & $ -2649  $ & & $5,2,1^3 $ & $-99 $  & & $3^3,1 $ & $-39$& &$2^2,1^6$  &$9$ \\
$  6,4  $ & $  621 $ & & $5,1^5 $ & $-89 $  & & $3^2,2^2 $ & $-29$& & $2,1^8$ &$1$  \\
$ 6,3,1  $ & $529   $ & & $4^2,2 $ & $81 $  & & $3^2,2,1^2 $ & $-27$& & $1^{10}$ &$-9$ \\
$ 6,2^2   $ & $495   $ & & $4^2,1^2 $ & $75 $  & & $3^2,1^4 $ & $-23$& &  & \\
$ 6,2,1^2   $ & $  477 $ & & $4,3^2 $ & $39 $  & & $3,2^3,1 $ & $-9$& &  & \\
\hline
\end{tabular}
\end{center}
}

{\small

\begin{center}
\begin{tabular}{|rr|r|rr|r|rr|r|rr|}
\multicolumn{11}{c} {$n=11$,\  $\lambda_{1} \ge 5$} \\
\hline
$\lambda$ &  $\eta_{\lambda}$ & \ \ \ & $\lambda$ &  $\eta_{\lambda}$ & \ \ \  &  $\lambda$ & $\eta_{\lambda}$&  \ \ \ &$\lambda$ & $\eta_{\lambda}$ \\
\hline
$  11 $ & $ 14684570  $ & & $  7, 3, 1  $ & $ 3338   $  & & $ 6, 2^{2}, 1$ & $-557 $ & & $ 5, 3, 1^{3}$  & $134$ \\
$  10, 1 $ & $ -1468457  $ & & $ 7, 2^{2}  $ & $  3178   $  & & $6, 2, 1^{3} $ & $-530 $ & & $5, 2^{3} $  & $122$ \\
$  9, 2 $ & $ 166870  $ & & $ 7, 2, 1^{2}  $ & $ 3090   $  & & $6, 1^{5} $ & $-485 $ & & $5, 2^{2}, 1^{2} $  & $118$ \\
$ 9, 1^{2}  $ & $ 163162  $ & & $ 7, 1^{4}   $ & $  2914  $  & & $5^{2}, 1 $ & $362 $ & & $5, 2, 1^{4} $  & $ 110$ \\
$ 8, 3  $ & $  -22249  $ & & $ 6, 5  $ & $ -905   $  & & $5, 4, 2 $ & $ 178 $ & & $5, 1^{6} $  & $98$ \\
$ 8, 2, 1  $ & $  -21190 $ & & $ 6, 4, 1  $ & $ -710   $  & & $ 5, 4, 1^{2}$ & $170 $ & & $ $  & $$ \\
$ 8, 1^{3}  $ & $ -20395  $ & & $ 6, 3, 2  $ & $ -617   $  & & $ 5, 3^{2} $ & $158 $ & & $ $  & $$ \\
$ 7, 4  $ & $  3706  $ & & $ 6, 3, 1^{2}  $ & $ -595   $  & & $5, 3, 2, 1 $ & $143 $ & & $ $  & $$  \\

\hline
\end{tabular}
\end{center}

\begin{center}
\begin{tabular}{|rr|r|rr|r|rr|r|rr|}
\multicolumn{11}{c} {$n=12$,\  $\lambda_{1} \ge 6$} \\
\hline
$\lambda$ &  $\eta_{\lambda}$ & \ \ \ & $\lambda$ &  $\eta_{\lambda}$ & \ \ \  &  $\lambda$ & $\eta_{\lambda}$&  \ \ \ &$\lambda$ & $\eta_{\lambda}$ \\
\hline
$ 12  $ & $ 176214841  $ & & $ 8, 3, 1   $ & $ 24721   $  & & $ 7, 2^{2}, 1 $ & $ -3531$ & & $6, 3, 2, 1 $  & $694$ \\
$   11, 1 $ & $  -16019531  $ & & $ 8, 2^{2}   $ & $  23839   $  & & $7, 2, 1^{3}  $ & $ -3399$ & & $6, 3, 1^{3}  $  & $661$ \\
$ 10, 2  $ & $  1631619  $ & & $8, 2, 1^{2}  $ & $23309  $  & & $7, 1^{5} $ & $  -3179$ & & $ 6, 2^{3}$  & $637$ \\
$ 10, 1^{2}  $ & $ 1601953  $ & & $ 8, 1^{4}   $ & $ 22249   $  & & $ 6^{2} $ & $ 2119$ & & $6, 2^{2}, 1^{2} $  & $619 $ \\
$ 9, 3  $ & $ -190709   $ & & $7, 5 $ & $ -4959   $  & & $ 6, 5, 1$ & $ 1033 $ & & $6, 2, 1^{4}  $  & $583$ \\
$ 9, 2, 1  $ & $ -183557  $ & & $ 7, 4, 1  $ & $  -4169   $  & & $6, 4, 2 $ & $ 829 $ & & $6, 1^{6} $  & $ 529$ \\
$ 9, 1^{3}  $ & $ -177995  $ & & $7, 3, 2  $ & $  -3815  $  & & $6, 4, 1^{2}  $ & $ 799$ & & $ $  & $$ \\
$ 8, 4  $ & $  26701   $ & & $ 7, 3, 1^{2}  $ & $ -3709   $  & & $6, 3^{2} $ & $ 739$ & & $ $  & $$  \\

\hline
\end{tabular}
\end{center}

\begin{center}
\begin{tabular}{|rr|r|rr|r|rr|r|rr|}
\multicolumn{11}{c} {$n=13$, \ $\lambda_{1} \ge 6$} \\
\hline
$\lambda$ &  $\eta_{\lambda}$ & \ \ \ & $\lambda$ &  $\eta_{\lambda}$ & \ \ \  &  $\lambda$ & $\eta_{\lambda}$&  \ \ \ &$\lambda$ & $\eta_{\lambda}$ \\
\hline
$ 13  $ & $  2290792932  $ & & $  9, 1^{4}   $ & $ 192828   $  & & $7, 4, 1^{2} $ & $4632 $ & & $6, 4, 3 $  & $-996$ \\
$ 12, 1  $ & $ -190899411  $ & & $ 8, 5  $ & $ -33363    $  & &$7,3^{2} $ & $4452$ & & $6, 4, 2, 1 $  & $ -933$ \\
$ 11, 2  $ & $ 17621484  $ & & $ 8, 4, 1 $ & $ -29668  $  & & $7, 3, 2, 1 $ & $4239 $ & & $ 6, 4, 1^{3} $  & $-888$ \\
$ 11, 1^{2}  $ & $  17354492  $ & & $ 8, 3, 2   $ & $  -27811   $  & & $7, 3, 1^{3}  $ & $ 4080$ & & $6, 3^{2}, 1 $  & $ -831$ \\
$ 10, 3  $ & $ -1835571   $ & & $8, 3, 1^{2} $ & $   -27193  $  & & $7, 2^{3} $ & $ 3972  $ & & $6, 3, 2^{2} $  & $ -793$ \\
$ 10, 2, 1  $ & $ -1779948  $ & & $ 8, 2^{2}, 1  $ & $  -26223   $  & & $ 7, 2^{2}, 1^{2} $ & $3884 $ & & $6, 3, 2, 1^{2} $  & $-771$ \\
$ 10, 1^{3}  $ & $ -1735449  $ & & $ 8, 2, 1^{3} $ & $  -25428   $  & & $ 7, 2, 1^{4} $ & $3708 $ & & $6, 3, 1^{4} $  & $-727$ \\
$ 9, 4  $ & $  222492   $ & & $ 8, 1^{5}  $ & $  -24103   $  & & $7, 1^{6} $ & $3444 $ & & $ 6, 2^{3}, 1$  & $-708$  \\
$ 9, 3, 1  $ & $ 209780  $ & & $7, 6   $ & $ 7284   $  & & $6^{2}, 1  $ & $-2428 $ & & $6, 2^{2}, 1^{3} $  & $-681$ \\
$ 9, 2^{2}  $ & $  203952  $ & & $7, 5, 1  $ & $ 5580   $  & & $6, 5, 2  $ & $ -1203 $ & & $ 6, 2, 1^{5}$  & $ -636$ \\
$  9, 2, 1^{2} $ & $ 200244   $ & & $7, 4, 2   $ & $  4764  $  & & $6, 5, 1^{2} $ & $-1161 $ & & $6, 1^{7} $  & $-573$  \\

\hline
\end{tabular}
\end{center}
}

{\small

\begin{minipage}[t]{5cm}
\begin{tabular}{|rr|r|rr|r|rr|r|rr|}
\multicolumn{11}{c} {$n=15$} \\
\hline
$\lambda$ &  $\eta_{\lambda}$ & \ \ \ & $\lambda$ &  $\eta_{\lambda}$ & \ \ \  &  $\lambda$ & $\eta_{\lambda}$&  \ \ \ &$\lambda$ & $\eta_{\lambda}$ \\
\hline
$ 15  $ & $  481066515734 $ & & $  7^{2}, 1   $ & $ 18806  $ & & $ 6, 2, 1^{7} $ & $-742$ & &$4, 3^{2}, 2^{2}, 1$ & $-77$ \\
$ 14, 1 $ & $ -34361893981 $ & & $  7, 6, 2  $ & $ 9350 $  & & $ 6, 1^{9} $ & $-661$ & & $4, 3^{2}, 2, 1^{3}$ & $-74$ \\
$ 13, 2  $ & $  2672591754 $ & & $  7, 6, 1^{2} $ & $  9094 $  & & $ 5^{3} $ & $1214$ & & $4, 3^{2}, 1^{5}$ & $ -69$   \\
$ 13, 1^{2} $ & $ 2643222614 $ & & $ 7, 5, 3   $ & $ 7446 $ & & $5^{2}, 4, 1 $ & $859$ & & $4, 3, 2^{4}$ & $-73$ \\
$ 12, 3  $ & $ -229079293 $ & & $ 7, 5, 2, 1  $ & $  7089 $ & & $5^{2}, 3, 2 $ & $742$ & & $4, 3, 2^{3}, 1^{2}$ & $-71$  \\
$ 12, 2, 1  $ & $ -224273434 $ & & $ 7, 5, 1^{3}  $ & $   6822 $ & &$5^{2}, 3, 1^{2} $ & $714$ & & $ 4, 3, 2^{3}, 1^{4}$ & $-67$  \\
$ 12, 1^{3}  $ & $  -220268551 $ & & $  7, 4^{2} $ & $  6662 $ & & $5^{2}, 2^{2}, 1 $ & $662$& & $4, 3, 2, 1^{6}$ & $-61$  \\
$ 11, 4  $ & $  22026854  $ & &$ 7, 4, 3, 1  $ & $  6174 $ & &$5^{2},2,1^{3}  $ & $629$ & & $ 4, 3, 1^{8}$ & $-53$   \\
$ 11, 3, 1 $ & $ 21211046  $ & & $ 7, 4, 2^{2} $ & $  5954 $ & &$5^{2}, 1^{5}$ & $574$ & & $4, 2^{5}, 1 $ & $-66$  \\
$ 11, 2^{2} $ & $  20825390 $ & & $ 7, 4, 2, 1^{2} $ & $ 5822 $ & & $5, 4^{2}, 2$ & $374$& & $4, 2^{4}, 1^{3}$ & $-63$  \\
$ 11, 2, 1^{2}  $ & $  20558398 $ & & $ 7, 4, 1^{4} $ & $ 5558 $ & &$5, 4^{2}, 1^{2}$ & $362 $ & & $ 4, 2^{3}, 1^{5}$ & $-58$ \\
$ 11, 1^{4}  $ & $ 20024414 $ & & $ 7, 3^{2}, 2 $ & $ 5566 $ & &$5, 4, 3^{2}$ & $350$ & & $4, 2^{2}, 1^{7}$ & $-51$ \\
$ 10, 5   $ & $  -2447421 $ & & $ 7, 3^{2}, 1^{2}$ & $ 5442 $ & &$5, 4, 3, 2, 1$ & $329$& & $4, 2, 1^{9}$ & $-42$ \\
$ 10, 4, 1 $ & $ -2288506 $ & &$ 7, 3, 2^{2}, 1 $ & $ 5246 $ & & $5, 4, 3, 1^{3}$ & $314$& & $4, 1^{11} $ & $-31$ \\
$ 10, 3, 2 $ & $ -2202685 $ & & $ 7, 3, 2, 1^{3} $ & $ 5087 $ & & $5, 4, 2^{3} $ & $302 $& & $ 3^{5} $ & $134$ \\
$ 10, 3, 1^{2}  $ & $   -2169311 $ & & $ 7, 3, 1^{5} $ & $ 4822 $ & &$5, 4, 2^{2}, 1^{2}$ & $294$  & & $3^{4}, 2, 1$  &  $119$  \\
$ 10, 2^{2}, 1  $ & $  -2121105 $ & & $ 7, 2^{4} $ & $ 4854 $ & & $5, 4, 2, 1^{4}$ &  $278$ & & $3^{4}, 1^{3}$ & $110$ \\
$ 10, 2, 1^{3}   $ & $  -2076606 $ & & $ 7, 2^{3}, 1^{2} $ & $ 4766 $ & &$5, 4, 1^{6} $ & $254$ & & $3^{3}, 2^{3}$ & $98$  \\
$ 10, 1^{5} $ & $ -2002441 $ & & $ 7, 2^{2}, 1^{4} $ & $ 4590 $  & &$5, 3^{3}, 1 $ & $290 $ & & $3^{3}, 2^{2}, 1^{2}$ & $94$ \\
$  9, 6  $ & $ 333674 $ & & $ 7, 2, 1^{6} $ & $ 4326 $ & & $5, 3^{2}, 2^{2}$ & $274$& &  $3^{3}, 2, 1^{4}$ & $86$\\
$ 9, 5, 1   $ & $ 293702 $ & & $ 7, 1^{8} $ & $ 3974 $ & & $5, 3^{2}, 2, 1^{2}$ & $266$& &  $3^{3}, 1^{6}$ & $74$\\
$ 9, 4, 2  $ & $ 271934 $ & & $ 6^{2}, 3 $ & $ -3430 $ & & $5, 3^{2}, 1^{4}$ & $250$& &  $3^{2}, 2^{4}, 1$ & $62$ \\
$ 9, 4, 1^{2}  $ & $  266990 $ & & $ 6^{2}, 2, 1 $ & $-3205$ & & $5, 3, 2^{3}, 1$ & $239$& &  $3^{2}, 2^{3}, 1^{3}$ & $59$ \\
$ 9, 3^{2}  $ & $  262226 $ & & $ 6^{2}, 1^{3} $ &  $-3046$  & &$5, 3, 2^{2}, 1^{3}$ & $230$ & &  $3^{2}, 2^{2}, 1^{5}$ & $54$\\
$ 9, 3, 2, 1  $ & $ 254279  $ & & $ 6, 5, 4 $ &  $-1789$  & & $5, 3, 2, 1^{5}$ & $215$& & $3^{2}, 2, 1^{7}$ & $47$ \\
$  9, 3, 1^{3} $ & $  247922 $ & & $ 6, 5, 3, 1 $ & $-1617$  & &$5, 3, 1^{7}$ & $194$ & & $3^{2}, 1^{9}$ & $38$ \\
$   9, 2^{3} $ & $ 244742 $ & & $6, 5, 2^{2} $ &   $-1543$  & &$5, 2^{5}$ & $194$  & & $3, 2^{6}$ & $14$ \\
$   9, 2^{2}, 1^{2} $ & $ 241034 $ & & $6, 5, 2, 1^{2}$ & $ -1501$ & &$5, 2^{4}, 1^{2}$ & $190$ & & $3, 2^{5}, 1^{2} $ & $14$  \\
$  9, 2, 1^{4} $ & $ 233618 $ & & $ 6, 5, 1^{4} $ & $-1417$  & &$5, 2^{3}, 1^{4}$ & $182$& & $3, 2^{4}, 1^{4} $ & $14$ \\
$  9, 1^{6}  $ & $ 222494 $ & & $6, 4^{2}, 1 $ & $-1411$  & &$5, 2^{2}, 1^{6} $ & $170$ & & $3, 2^{3}, 1^{6}$ & $14$ \\
$ 8, 7  $ & $ -65821 $ & & $6, 4, 3, 2 $ & $-1282$  & &$5, 2, 1^{8}$ & $154$& & $3, 2^{2}, 1^{8}$ & $14$ \\
$ 8, 6, 1  $ & $ -49546 $ & &$6, 4, 3, 1^{2} $ & $ -1246$   & &$5, 1^{10} $ & $134$ & & $3, 2, 1^{10}$ & $14$\\
$ 8,5,2 $    &   $-41701$     & & $6,4,2^{2},1$ &  $-1181$      & & $4^{3}, 3$         & $-331$   & & $3, 1^{12}$ & $14$ \\
$ 8, 5, 1^{2}  $ & $  -40775 $ & & $6, 4, 2, 1^{3} $ & $-1141$  & &$4^{3}, 2, 1$ & $-298$ & & $2^{7}, 1$ & $-49$\\
$  8, 4, 3  $ & $ -38146 $ & & $6, 4, 1^{5} $ & $-1066$  & &$4^{3}, 1, 1, 1 $ & $-277$ & & $2^{6}, 1^{3}$ & $-46$ \\
$ 8, 4, 2, 1  $ & $ -36715 $ & & $6, 3^{3} $ & $-1105$  & &$4^{2}, 3^{2}, 1 $ & $-226$ & & $2^{5}, 1^{5}$ & $-41$\\
$ 8, 4, 1^{3}  $ & $  -35602 $ & & $6, 3^{2}, 2, 1$ & $-1054$  & &$4^{2}, 3, 2^{2}$ & $-210$ & & $2^{4}, 1^{7} $ & $-34$  \\
$  8, 3^{2}, 1 $ & $ -34961 $ & & $6, 3^{2}, 1^{3}$ & $-1015$   & & $4^{2}, 3, 2, 1^{2}$ & $-202$& & $2^{3}, 1^{9} $ & $-25$\\
$ 8, 3, 2^{2}   $ & $ -33991 $ & & $6, 3, 2^{3}$ & $ -991$   & & $4^{2}, 3, 1^{4}$ & $-186$& & $2^{2}, 1^{11} $ & $-14$\\
$  8, 3, 2, 1^{2}  $ & $  -33373 $ & & $6, 3, 2^{2}, 1^{2} $ & $-969$   & &$4^{2}, 2^{3}, 1 $ & $-175$ & & $2, 1^{13} $ & $-1$ \\
$   8, 3, 1^{4} $ & $ -32137 $ & & $6, 3, 2, 1^{4}$ & $ -925$    &&$4^{2}, 2^{2}, 1^{3} $ & $-166$ & & $1^{15}$ & $14$\\
$ 8, 2^{3}, 1  $ & $ -31786 $ & & $ 6, 3, 1^{6} $ & $-859$    & &$4^{2}, 2, 1^{5}$ & $-151$ & & $ $   & $ $\\
$ 8, 2^{2}, 1^{3}  $ & $  -30991 $ & & $ 6, 2^{4}, 1 $ & $ -877$  & & $4^{2}, 1^{7}$ & $-130$& & &\\
$  8, 2, 1^{5} $ & $ -29666 $ & & $ 6, 2^{3}, 1^{3} $ & $-850$  & & $4, 3^{3}, 2$ & $-81$& &  &\\
$  8, 1^{7}  $ & $  -27811 $ & & $6, 2^{2}, 1^{5} $ & $ -805$  & & $4, 3^{3}, 1^{2}$ & $-79$& &  &\\
\hline
\end{tabular}
\end{minipage}

}

\end{section}

\end{document}